\documentclass{article}
\usepackage{graphicx,amsmath,amsthm,amssymb} % Required for inserting images
\usepackage{xcolor}
\usepackage{fullpage}
\usepackage{enumitem}
\usepackage{tikz}

\title{On the Domatic Game}
\author{Sean English\footnote{University of North Carolina Wilmington. \texttt{EnglishS@uncw.edu}} \and London Swan\footnote{University of North Carolina Wilmington. \texttt{Lcs2908@uncw.edu}}}
\date{}
\usepackage{hyperref}
\newtheorem{theorem}{Theorem}[section]
\newtheorem{corollary}[theorem]{Corollary}
\newtheorem{proposition}[theorem]{Proposition}
\newtheorem{lemma}[theorem]{Lemma}
\newtheorem{claim}[theorem]{Claim}

\newtheorem{question}[theorem]{Question}

\newtheorem{conjecture}[theorem]{Conjecture}
\newtheorem{observation}[theorem]{Observation}
\newtheorem{definition}{Definition}

\DeclareMathOperator{\domg}{dom_g}
\DeclareMathOperator{\dom}{dom}
\DeclareMathOperator{\score}{score}

\begin{document}
	
	\maketitle
	\begin{abstract}
		The domatic game with pallete size $k$ is a $2$-player game played on a graph $G$ recently introduced by Hartnell and Rall. Players Alice and Bob take turns choosing an uncolored vertex from $G$, and coloring it a color from $\{1,2,\dots,k\}$. The game ends once all vertices in $G$ have been assigned a color. Alice wins if all $k$ colors induce a dominating set of $G$, and otherwise Bob wins. The domatic game number, $\domg(G,X)$ is the the largest pallete size $k$ such that Alice wins the domatic game when player $X$ goes first (where $X$ is either Alice or Bob).
		
		We prove for any graph $G$ of order $n$,
		\[
		\domg(G,X)=\Omega\left(\frac{\delta(G)}{\log n}\right).
		\]
		In addition, we show that for any $k$ there exists a graph $G$ with minimum degree $\delta(G)=k$ and $\domg(G,X)=1$, and there exists a graph $G'$ with $\domg(G',X)=1$ while having (non-game) domatic number $\dom(G')=k$. We explore how the domatic game number changes when changing who goes first, and when considering subgraphs of $G$. We also introduce a score variant of the domatic game, and use this to get bounds on the original domatic game.	
	\end{abstract}
	
	\section{Introduction}
	Let $G$ be a graph. A set $S\subseteq V(G)$ is a \textbf{dominating set} if every vertex in $V(G)$ is either in $S$ or adjacent to $S$. The \textbf{domination number} of $G$, $\gamma(G)$ is the size of the smallest dominating set in $G$. A \textbf{domatic partition} of $G$ is a partition $V(G)=S_1\cup S_2\cup\dots\cup S_k$ such that each $S_i$ is a dominating set in $G$. The \textbf{domatic number} of the graph $G$, denoted $\dom(G)$, is the size of the largest domatic partition. For a (somewhat dated) survey on domatic numbers, see~\cite{Z1998}, or for a more recent but less comprehensive look at the domatic number, see Chapter 12 from~\cite{HHH2023}.
	
	Motivated by the domatic number, the \textbf{domatic game}, recently introduced by Hartnell and Rall~\cite{HR2025}, is a $2$-player combinatorial game. In the domatic game on the graph $G$ with palette size $k$, players Alice and Bob take turns selecting vertices of $G$ and coloring them a color from $[k]$. The game ends once every vertex in $G$ has been assigned a color. Alice wins if for every $c\in [k]$, the set of vertices colored $c$ forms a dominating set of $G$, and Bob wins otherwise. There are two variations of this game, based on which player goes first. The \textbf{domatic game number} of a graph $G$ is the largest palette size that Alice has a strategy to win regardless of Bob's strategy. We write $\domg(G,A)$ for the domatic game number when Alice goes first, and when Bob goes first we denote the domatic game number by $\domg(G,B)$. We may also write $\domg(G,X)$, where $X\in \{A,B\}$, to talk about the game where we do not specify who goes first. First noted by Hartnell and Rall~\cite{HR2025}, it is straightforward to see that
	\[
	\domg(G,X)\leq \dom(G)
	\]
	for every graph $G$.
	
	In addition to studying the domatic game as introduced by Hartnell and Rall~\cite{HR2025}, we also introduce a score variant of the game which is useful for proving bounds on $\domg(G,X)$, and we believe of independent interest as well. The \textbf{domatic score game} with palette size $k$ on the graph $G$ is played identically to the domatic game, however instead of players winning or losing, we assign a score to the end of the game. In particular, the \textbf{score} is the number of colors which induce a dominating set in $G$. Bob's goal is to minimize the score, whereas Alice's goal is to maximize the score. We denote by $\score(G,X,k)$ the score at the end of the domatic score game played on $G$ with palette size $k$ and Player $X$ moving first, when both players play optimally.
	
	\subsection{Definitions, Notation and Organization}
	
	For a positive integer $k$, we will write $[k]:=\{1,2,\dots,k\}$. Given a set $S$ and a non-negative integer $k$, we let $\binom{S}{k}$ denote the collection of all $k$-element subsets of $S$. For asymptotic expressions, we will use Bachmann-Landau notation, and all asymptotics will be as $n\to \infty$. We will write $\log$ without a base for the natural logarithm.
	
	The paper is organized as follows. In Section~\ref{section degree}, we will explore bounds between the domatic game number and minimum degree of a graph, and provide some other related bounds. In Section~\ref{section relationship between games}, we will explore bounds that relate one domatic game to another, for example when changing who goes first, or changing the palette size of the game. In Section~\ref{section subgraphs}, we will explore how the domatic game number interacts with subgraphs. Finally in Section~\ref{section concluding remarks}, we leave the reader with some open questions.

	\section{Degree and the Domatic Game}\label{section degree}

	Our first result deals with the relationship between the domatic game number and the minimum degree of the graph. It is straightforward to see that $\domg(G,X)\leq \dom(G)\leq \delta(G)+1$. Refining this, Hartnell and Rall proved the following.
	
	\begin{proposition}[\cite{HR2025}]\label{proposition upper bound in terms of min degree}
		For any graph $G$ on $n$ vertices,
		\[
		\domg(G,X)\leq \frac{\delta(G)+3}{2}.
		\]
		Furthermore, there are examples of graphs where this bound is tight.
	\end{proposition}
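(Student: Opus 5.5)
We will show that Bob wins the domatic game on $G$ with palette size $k$ whenever $k>\frac{\delta(G)+3}{2}$, which is the same as $2k-3>\delta(G)$, or $\delta(G)\le 2k-4$. Fix a vertex $v$ of minimum degree $\delta=\delta(G)$, and let $N[v]$ be its closed neighborhood, which has $\delta+1$ vertices. Alice wins only if every one of the $k$ colors appears on some vertex of $N[v]$. So it suffices to give Bob a strategy that wastes enough of $N[v]$ that fewer than $k$ distinct colors appear there.

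Bob's strategy concerns only $N[v]$. Whenever Bob moves and $N[v]$ still has an uncolored vertex, he colors such a vertex with a color that already appears in $N[v]$; if no color appears there yet, he uses color $1$. If $N[v]$ is fully colored, he moves arbitrarily elsewhere. Each of Bob's moves inside $N[v]$ adds no new color, except possibly the very first move in $N[v]$ if Bob makes it. Alice moves at most once between consecutive Bob moves, so Bob colors about half of $N[v]$: at least $\lfloor(\delta+1)/2\rfloor$ vertices, and at least $\lceil(\delta+1)/2\rceil$ of them if he is first to touch $N[v]$. If Bob plays first in $N[v]$, the number of distinct colors in $N[v]$ is at most $1+\lfloor(\delta+1)/2\rfloor$. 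If Alice plays first there, it is at most the number of Alice's moves in $N[v]$, which is at most $\lceil(\delta+1)/2\rceil$. In either case the count is at most $\frac{\delta+3}{2}<k$, so some color misses $N[v]$, fails to dominate $v$, and Bob wins.

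The main obstacle is the parity and tempo bookkeeping. Alice may avoid $N[v]$ in order to change who plays there first, so the counting must be stated carefully. I would carry it out as follows. Let $a$ and $b$ be the numbers of vertices of $N[v]$ colored by Alice and by Bob. Since Bob always answers inside $N[v]$ while it has uncolored vertices, every Alice move in $N[v]$, except possibly the one that fills it, is followed by a Bob move there. This gives $a\le b+1$. The number of distinct colors is at most $a+\mathbf{1}[\text{Bob colored first in } N[v]]$. Combining these with $a+b=\delta+1$ gives the bound $\lfloor(\delta+3)/2\rfloor$ in both cases.

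For tightness, I would simply exhibit a family, for example cliques $K_n$ with an appropriate parity, and check directly that Alice wins with $k=\lfloor(\delta+3)/2\rfloor$. The argument is that Alice answers each of Bob's moves with a new color in the single relevant closed neighborhood. Tightness was established in \cite{HR2025}, so I would cite it rather than reprove it.
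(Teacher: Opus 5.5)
The paper does not prove this proposition; it quotes it from Hartnell and Rall, so there is no in-paper proof to compare against. Your argument is correct and is the natural one. You fix a minimum-degree vertex $v$, and Bob spends every move in $N[v]$, while it has an uncolored vertex, on a color already present there. Note that the argument never uses the palette size $k$ except in the final comparison. That is exactly the property the paper later invokes to carry the bound over to the score game (Corollary~\ref{corollary absolute upper bounds on score}).

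One point in your ``careful'' third paragraph needs fixing, because as written it is not enough. The inequality $a\le b+1$, together with $a+b=\delta+1$ and the indicator, fails in the case where Bob is first to color in $N[v]$. There it only bounds the number of colors by $\lfloor(\delta+2)/2\rfloor+1$, which equals $(\delta+4)/2>(\delta+3)/2$ when $\delta$ is even. In that case you need the sharper inequality $a\le b$. From Bob's first move in $N[v]$ onward, play alternates starting with Bob, and every Bob move lands in $N[v]$ until it is full, so Alice makes at most as many moves there as Bob. Your second paragraph does use this (Bob colors at least $\lceil(\delta+1)/2\rceil$ vertices when he is first), so the proof stands once both case-specific inequalities are stated.

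For tightness, say explicitly that the first player matters. The clique $K_{2t}$ with Bob moving first works: Alice's always-use-a-new-color strategy adds $t$ colors to Bob's first one, giving $\domg(K_{2t},B)=t+1=(\delta+3)/2$. With Alice moving first, your own Bob strategy holds $K_{2t}$ to $t$ colors, so that version is not tight.
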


	It is worth noting that Hartnell and Rall's result is actually slightly stronger than this - you can do slightly better with more information about the parity of $n$ and knowing who goes first. In particular, if $G$ is regular of odd degree and Alice goes first, the bound above can be improved by $1$.
	
	Our result will focus on a lower bound in terms of the minimum degree. Zelinka~\cite{Z1983} exhibited an family of graphs with unbounded minimum degree where every graph in the family had domatic number 2. Thus, increasing minimum degree alone is not sufficient to guarantee the domatic number or domati game number increases. The examples given by Zelinka had ``large'' maximum degree relative to minimum degree. Feige, Halld\'orsson, Kortsarz and Srinivasan~\cite{FHKS2002} prove the following, showing that the gap between minimum degree and maximum degree is necessary to have small domatic number but large minimum degree. 
	
	\begin{theorem}[\cite{FHKS2002}]\label{theorem domatic number minimum degree maximum degree lower bound}
		For any graph $G$,
		\[
		\dom(G)=\Omega\left(\frac{\delta(G)}{\log \Delta(G)}\right),
		\]
		and there exists graphs where the above bound is tight. 
	\end{theorem}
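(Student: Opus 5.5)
The plan is to prove the lower bound with a random colouring and the Lov\'asz Local Lemma, and to prove tightness with an explicit ``set-system'' graph. If $\delta(G)=0$ the bound holds trivially, since $\dom(G)\geq 1$ always (take $V(G)$ as a single class). For the same reason we may assume $\delta(G)$ is at least a suitable multiple of $\log\Delta(G)$: otherwise $\delta/\log\Delta=O(1)$ and $\dom(G)\ge 1$ already suffices. We may likewise assume $\Delta(G)$ is larger than any fixed constant.

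\textbf{Random colouring.} Set
\[
k=\left\lfloor \frac{\delta+1}{4\log \Delta}\right\rfloor\geq 1,
\]
and colour every vertex independently and uniformly at random from $[k]$. For each $v\in V(G)$, let $A_v$ be the event that some colour is missing from the closed neighbourhood $N[v]$. If no $A_v$ occurs, the $k$ colour classes form a domatic partition, so $\dom(G)\ge k$.

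\textbf{Bounding the bad events.} By a union bound over colours, and since $|N[v]|\ge\delta+1$,
\[
\Pr(A_v)\le k\left(1-\tfrac1k\right)^{\delta+1}\le k e^{-(\delta+1)/k}\le (\Delta+1)\Delta^{-4},
\]
where we used $k\le \delta+1\le\Delta+1$. The event $A_v$ is determined by the colours on $N[v]$. Hence $A_v$ is mutually independent of all $A_u$ with $N[u]\cap N[v]=\emptyset$, i.e.\ of all $u$ at distance more than $2$ from $v$. So each event depends on at most $d\le\Delta^2$ others. The symmetric Local Lemma requires $e\,p\,(d+1)\le1$, here
\[
e(\Delta+1)(\Delta^2+1)\Delta^{-4}\le 1,
\]
which holds once $\Delta$ exceeds a small absolute constant. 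Therefore with positive probability no $A_v$ occurs, giving
\[
\dom(G)\ge \frac{\delta+1}{4\log\Delta}-1=\Omega\!\left(\frac{\delta}{\log\Delta}\right).
\]
The only delicate point here is choosing the constant in $k$ so that the exponential factor $e^{-(\delta+1)/k}$ beats both the dependency degree $\Delta^2$ and the prefactor $k\le\Delta+1$. That is why the factor $4$ appears rather than $2$.

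\textbf{Tightness.} I expect this to be the main obstacle, since it needs a construction where $\Delta$ is huge relative to $\delta$. I would first try the following graph. Take a clique $U$ with $|U|=2\delta$. For each $S\in\binom{U}{\delta}$ add a new vertex $x_S$ adjacent exactly to $S$, with the vertices $x_S$ pairwise non-adjacent. Then:
\begin{itemize}
\item $\delta(G)=\delta$;
\item $\Delta(G)\le\binom{2\delta}{\delta}+2\delta$, so $\log\Delta=\Theta(\delta)$ and $\delta/\log\Delta=\Theta(1)$.
\end{itemize}
Now suppose $G$ had a domatic partition into $k\ge4$ classes. The two smallest classes meet $U$ in at most $2|U|/k\le\delta$ vertices in total. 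So some $S\in\binom{U}{\delta}$ avoids both of these classes. The vertex $x_S$ lies in only one class, so the other of these two classes fails to dominate $x_S$, a contradiction. Hence $\dom(G)\le3=O(\delta/\log\Delta)$.

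This settles tightness in the regime $\log\Delta=\Theta(\delta)$. To get matching examples for other ratios, I would generalise the construction: use ground sets of size $m$ and sets of size $\delta$ with $m\gg\delta$, or replace the complete family $\binom{U}{\delta}$ by a random $\delta$-uniform family that is hard to ``split''. One then needs a counting argument showing that every partition of $U$ into roughly $C\delta/\log\Delta$ parts leaves some set missing two parts. This is the step I would expect to require the most care.
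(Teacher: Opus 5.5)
The paper does not prove this statement; it quotes it from \cite{FHKS2002}. Your argument therefore has to stand on its own, and it does.

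\textbf{Lower bound.} The Local Lemma part is correct as written.
With $k=\lfloor(\delta+1)/(4\log\Delta)\rfloor$ you get $\Pr(A_v)\le k e^{-(\delta+1)/k}\le(\Delta+1)\Delta^{-4}$. The dependency graph, joining $u,v$ when $N[u]\cap N[v]\neq\emptyset$, has degree at most $\Delta^2$. The condition $e(\Delta+1)(\Delta^2+1)\Delta^{-4}\le 1$ already holds for $\Delta\ge 4$.
Your preliminary reductions to $\delta\ge C\log\Delta$ and to large $\Delta$ are legitimate because $\dom(G)\ge 1$ always.
This is the standard route. \cite{FHKS2002} obtain the sharper $(1-o(1))(\delta+1)/\log\Delta$ with a more careful analysis, but for an $\Omega$-statement your constant $1/4$ costs nothing.

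\textbf{Tightness example.} Your graph is also correct, once ``the two smallest classes'' is read as the two classes meeting $U$ least. Their traces on $U$ have total size at most $2|U|/k\le\delta$ for $k\ge 4$, so some $x_S$ misses one of them.
It is the clique version of the Zelinka-type set system the paper itself uses in the proof of Theorem~\ref{theorem graphs with large minimum degree but small domatic game number}. There the graph is bipartite on $|S|=2(k+2)$ with one vertex per $k$-subset.

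\textbf{What the example certifies.} It gives tightness along a family with $\delta\to\infty$, $\log\Delta=\Theta(\delta)$ and $\dom\le 3$. This shows that $\log\Delta$ cannot be replaced by anything of order $o(\log\Delta)$. It is exactly the sense in which the paper asserts tightness of its own Theorem~\ref{theorem main theorem on degree}, namely only in the regime $\delta=\Theta(\log n)$.
It says nothing about the regime $\delta/\log\Delta\to\infty$, and your final paragraph there is a plan, not a proof. If you want tightness for every ratio, you would still have to carry out the counting for a sparse random $\delta$-uniform family: roughly $e^{\Theta(\delta/k)}$ sets, a union bound over the possible light parts of $U$, and degree concentration to control $\Delta$. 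The statement as written does not demand this.
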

	
	Our first result extends this to the game setting with a slight weakening of the bound.
	
	\begin{theorem}\label{theorem main theorem on degree}
		For any graph $G$ on $n$ vertices,
		\[
		\domg(G,X)=\Omega\left(\frac{\delta(G)}{\log n}\right).
		\]
		Furthermore, for all $n$ there exists a graph $G_n$ on $n$ vertices with $\delta(G_n)=\Theta(\log n)$ and $\domg(G_n,X)=1$.
	\end{theorem}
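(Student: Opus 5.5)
The plan for the lower bound is an Erd\H{o}s--Selfridge style potential argument, with Alice playing greedily against a weight function. Fix $k$ to be chosen later and call a pair $(v,c)\in V(G)\times[k]$ \emph{unsatisfied} if no vertex of $N[v]$ has yet been colored $c$. Bob wins exactly when some pair is unsatisfied and all of $N[v]$ is colored; call such a pair \emph{dead}. Let $r(v)$ be the number of uncolored vertices of $N[v]$, fix $\lambda=1+\frac{1}{4k}$, and define
\[
\Phi=\sum_{(v,c)\ \text{unsatisfied}} \lambda^{-r(v)} .
\]
A dead pair contributes weight $1$, so it suffices to show that $\Phi<1$ at the end of the game. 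For an uncolored vertex $u$ put $D_c(u)=\sum_{v\in N[u]}[\,(v,c)\text{ unsatisfied}\,]\lambda^{-r(v)}$ and $D(u)=\sum_c D_c(u)$.

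When any vertex $u$ is colored, each unsatisfied term with $v\in N[u]$ gets multiplied by $\lambda$, and the terms $(v,c)$ with $c$ the chosen color disappear. Hence Bob's move on $u'$ increases $\Phi$ by at most $(\lambda-1)D(u')$. Alice's strategy is to pick the uncolored $u$ maximizing $D(u)$ and the color $c$ maximizing $D_c(u)$, so $D_c(u)\ge D(u)/k$. Her move then changes $\Phi$ by at most $(\lambda-1)D(u)-\lambda D(u)/k\le -\frac{3}{4k}D(u)$.

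In a round (Alice, then Bob), Bob's chosen vertex $u'$ was available to Alice, so $D(u')\le D(u)$ before her move. Her move can raise each weight by at most a factor $\lambda$, so afterwards $D(u')\le\lambda D(u)$. Bob's gain is therefore at most $\lambda(\lambda-1)D(u)\le\frac{1}{2k}D(u)$, and each full round does not increase $\Phi$. The bookkeeping at the start and end is handled as follows. If Bob moves first, his opening move multiplies $\Phi$ by at most $\lambda\le 2$. A final unanswered Bob move multiplies $\Phi$ by at most $\lambda$ again.

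So it suffices that
\[
4nk\lambda^{-(\delta+1)}\le 4nk\,e^{-(\delta+1)/(8k)}<1,
\]
which holds for $k=c\,\delta/\log n$ with a small absolute constant $c$ once $\delta\ge C\log n$. When $\delta<C\log n$, the trivial bound $\domg\ge1$ already gives the claim. The step I expect to need the most care is this comparison between Alice's greedy choice and Bob's subsequent move, i.e.\ controlling $D(u')$ after Alice has played. Choosing $\lambda-1=\Theta(1/k)$ is what makes the factor-$\lambda$ slack affordable.

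For the construction, I would show that Bob wins with palette size $2$, so $\domg(G_n,X)=1$. With two colors Bob wins iff some closed neighborhood avoids color $2$ (say), so Bob acts as a Maker who colors vertices $1$ and tries to fill some $N[x]$. Alice is a Breaker who can ruin a set by coloring one vertex $2$; coloring a vertex $1$ is never better for her. I would take a complete binary tree $T$ of depth $d$. For each root-to-leaf path $P_\ell$, attach a large set (size about $2d$) of new vertices $x_\ell^i$, each adjacent exactly to the vertices of $P_\ell$. Then $N[x_\ell^i]=P_\ell\cup\{x_\ell^i\}$, every vertex has degree at least $d+1$, and $n=\Theta(d2^{d})$, giving $\delta=\Theta(\log n)$.

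Bob's strategy is to claim the root and then walk down the tree. Whenever Alice breaks a vertex in the current subtree, he moves into the other child's subtree, which contains no vertex Alice has colored. Any moves Alice spends on $x$-vertices are wasted tempo, and Bob still extends his path. When he reaches a leaf, the path $P_\ell$ is all colored $1$. Alice has had at most $d+1$ moves, so she cannot have touched all $\approx 2d$ attached vertices $x_\ell^i$, and Bob colors a remaining one $1$ to finish $N[x_\ell^i]$. If Alice moves first, she could take the root, so I would duplicate the top of the structure, e.g.\ use two disjoint copies of the tree joined to share the $x$-gadgets appropriately, or simply add a second root. The main obstacle in this half is handling that first-move case and checking that Alice's off-tree moves never help her.
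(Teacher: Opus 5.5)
\paragraph{Lower bound.} Your lower-bound argument is correct, and it takes a genuinely different route from the paper's.

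The bookkeeping checks out:
\begin{itemize}
\item Over an Alice--Bob round, $\Phi$ changes by at most $D(u)\bigl[(\lambda-1)(1+\lambda)-\lambda/k\bigr]<0$.
\item The only Bob move not preceded by an Alice move in its round is the opening move of the Bob-first game. So your extra factor for a ``final unanswered Bob move'' is unnecessary, though harmless.
\item At the end, $\Phi$ is exactly the number of unsatisfied pairs, so $\Phi<1$ forces an Alice win.
\end{itemize}

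The paper instead randomly partitions $V(G)$ into $k$ classes so that every closed neighborhood (trimmed to size $r\log n$) meets every class in at least $\log_2 n+2$ vertices. Alice then runs $k$ separate Erd\H{o}s--Selfridge Breaker strategies, coloring $V_i$ only with color $i$ and answering in the class Bob just played in.

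Your single potential over (vertex, color) pairs, with Alice greedily choosing both vertex and color, is in effect a game version of the union bound $nk(1-1/k)^{\delta+1}<1$ for a random coloring. It avoids the partition claim and Lemma~\ref{lemma increasing function estimate}, and gives a better explicit constant than $1/100$. The paper's route buys modularity (Theorem~\ref{theorem Erdos selfridge} as a black box) and a rigid strategy with each color confined to a fixed class. Both pay the $\log n$ in the same place: the union over all $n$ neighborhoods.

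\paragraph{Construction.} Your construction also differs from the paper's Zelinka-type bipartite graph on $S=[2(k+2)]$ and $\binom{S}{k}$. There, Bob colors $k+2$ vertices of $S$ and then a set $T^*$ whose whole neighborhood he already owns. That works for either starting player unchanged, and the set $U$ pads to every $n$.

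Your walk-down is fine when Bob starts. However, the Alice-first case is left open, and your strategy as stated fails there: if Bob insists on color $1$, Alice colors the root $2$ and spoils every gadget neighborhood $P_\ell\cup\{x_\ell^i\}$. Sharing gadgets between copies would only enlarge $N[x]$. Two clean fixes are available.
\begin{itemize}
\item \emph{Adopt Alice's color.} Bob uses the color $c$ of Alice's first move and counts her first vertex as his own. Afterwards each Alice move is either in color $c$, which only helps Bob, or a single vertex of the other color in one child subtree. So the invariant that Bob's current subtree contains no vertex of the other color survives. If the chosen child is already colored $c$, Bob just descends further.
\item \emph{Disjoint union.} Take $2G'$, as in Theorem~\ref{theorem domatic number and domatic game number are far apart}, and let Bob start at the root of the copy Alice has not touched. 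Her moves in the other copy are then just skipped turns.
\end{itemize}
In both versions, take at least $d+3$ gadgets per path, so that Alice's at most $d+2$ moves cannot exhaust those on Bob's final path.

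Finally, the statement asks for every $n$. Pick $d$ maximal so that the base graph has at most $n$ vertices, and attach the surplus vertices as extra gadgets on one path. This keeps $\delta=d+1=\Theta(\log n)$ and only gives Bob more options.
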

	
	Theorem~\ref{theorem main theorem on degree} will follow immediately from Theorem~\ref{theorem graphs with large minimum degree but small domatic game number} and Theorem~\ref{theorem minimum degree formal theorem}. We note that in Section~\ref{section concluding remarks}, we make some comments on the differences in the bounds in Theorems~\ref{theorem domatic number minimum degree maximum degree lower bound} and~\ref{theorem main theorem on degree}.
	
	We need the following result, which gives an easy structural gadget we can use to force the domatic game nubmer to be $1$.
	
	\begin{proposition}[\cite{HR2025}]\label{proposition adjacent to two degree 1 implies game number 1}
		If $G$ is a graph which contains a vertex adjacent to two degree $1$ vertices, then $\domg(G,X)=1$.
	\end{proposition}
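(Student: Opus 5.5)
We need to show that with palette size $k\ge 2$ Bob has a winning strategy, whichever player moves first. Since palette size $1$ is always a win for Alice (the whole vertex set dominates), this gives $\domg(G,X)=1$. Let $v$ be the vertex adjacent to the two degree-$1$ vertices $u_1,u_2$. The key observation is that the closed neighbourhood of $u_i$ is $\{u_i,v\}$. So for every color $c$ to dominate $u_i$, some vertex of $\{u_i,v\}$ must receive color $c$. With $k\ge 2$ colors, this can only happen if $u_i$ and $v$ receive distinct colors, and together they use exactly the colors $1$ and $2$ when $k=2$. When $k\ge 3$ Alice loses automatically, since $\{u_1,v\}$ carries at most two colors. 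So assume $k=2$. Alice then needs $u_1$, $u_2$ and $v$ to satisfy $c(u_1)\ne c(v)$ and $c(u_2)\ne c(v)$, that is, $c(u_1)=c(u_2)\neq c(v)$.

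Bob's strategy is to break this condition at his first opportunity, responding within the gadget $\{u_1,u_2,v\}$.

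\textbf{Bob's first move.} If Bob moves first, he colors $u_1$ with color $1$ (any color works). He then plays the following pairing strategy.
\begin{itemize}
\item If Alice ever colors $v$, Bob colors $u_2$ with the same color as $v$, provided $u_2$ is uncolored.
\item If Alice colors $u_2$, Bob colors $v$ with the color of $u_2$, provided $v$ is uncolored.
\item Otherwise Bob plays arbitrarily.
\end{itemize}

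\textbf{Case analysis for Bob moving first.} I will check that some $u_i$ ends up with the same color as $v$, so that the color of $v$ fails to dominate the other color's neighbourhood; more precisely, the color missing from $\{u_i,v\}$ fails to dominate $u_i$.
\begin{itemize}
\item Suppose Alice colors $v$ with color $2$ to match the requirement for $u_1$. Bob then colors $u_2$ with color $2$, and $u_2$ is not dominated by color $1$.
\item Suppose Alice colors $v$ with color $1$. Then $u_1$ is already bad.
\item Suppose Alice colors $u_2$. Bob colors $v$ with the color of $u_2$.
\end{itemize}

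\textbf{Parity obstacle.} Bob is forced to act only if Alice touches the gadget. There are then two possibilities.
\begin{itemize}
\item If Alice never touches the gadget, Bob can eventually take $v$ himself and match it to $u_1$. Every move is a legal coloring, so Bob can always choose to play in the gadget when it is his turn and a gadget vertex remains.
\item The cleaner rule is therefore that Bob plays in the gadget whenever possible: on each of his turns he colors an uncolored gadget vertex so that it creates or preserves a monochromatic pair $\{u_i,v\}$.
\end{itemize}

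\textbf{Alice moving first.} If Alice moves first, she may color $v$ first, say with color $1$. Bob then colors $u_1$ with color $1$, and he is done. If instead Alice's first move is $u_1$, Bob colors $v$ with the same color. If her first move is outside the gadget, Bob colors $v$. Alice can then protect at most one of $u_1,u_2$ before Bob colors the other to match $v$.

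\textbf{Main step.} The step requiring care is the uniform verification that, in all interleavings, Bob gets to color a vertex of the gadget right after $v$ is colored, or gets to color $v$ after some $u_i$ is colored. In either case he creates the monochromatic pair. This holds because Bob always responds inside the gadget while the gadget has an uncolored vertex, and once any vertex of the gadget is colored, Bob's very next move completes a monochromatic $\{u_i,v\}$ edge. The only way Alice could prevent this is by coloring both $u_1$ and $u_2$ before $v$ is colored, and that requires two consecutive Alice moves in the gadget with no Bob response in between, which Bob's strategy forbids.
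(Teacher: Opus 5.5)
Your proof is correct. For $k\ge 3$ the two-vertex set $N[u_1]=\{u_1,v\}$ cannot carry all $k$ colors. For $k=2$, Bob's rule ``whenever a vertex of $\{u_1,u_2,v\}$ is uncolored on your turn, color one so as to create or keep a monochromatic pair $\{u_i,v\}$'' wins against either starting player. The reason is that once the first gadget vertex is colored, Alice gets at most one further gadget move before Bob replies. So when Bob replies, the vertex he needs (the other leaf, or $v$) is still uncolored, and he gives it the matching color.

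The paper does not prove this proposition itself; it quotes the result from Hartnell and Rall. So there is no in-paper argument to compare your route against, and your gadget argument is the natural one.

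When you write it up, state only the final ``play in the gadget whenever possible'' rule. Your first version, with ``otherwise Bob plays arbitrarily'', is not a winning strategy on its own. For example, after Bob colors $u_1$ with color $1$, an arbitrary Bob move coloring $v$ with color $2$ lets Alice later color $u_2$ with color $1$. None of your response rules fire at that point, so Alice wins the gadget. You should also replace the garbled sentence about ``the color of $v$ fails to dominate the other color's neighbourhood'' with your ``more precisely'' version: the color missing from $\{u_i,v\}$ does not dominate $u_i$.
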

	
	We start with the following.
	
	\begin{theorem}\label{theorem graphs with large minimum degree but small domatic game number}
		Let $n\in\mathbb{N}$. There exists a graph $G_n$ on $n$ vertices with $\delta(G_n)=\Omega(\log n)$ such that $\domg(G_n,X)=1$.
	\end{theorem}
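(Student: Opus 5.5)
The plan is to reduce Bob's task with palette size $2$ to a Maker--Breaker game and then build a graph in which Bob, acting as Maker, provably wins. With colors $\{1,2\}$, color $c$ fails to dominate exactly when some closed neighborhood $N[v]$ contains no vertex of color $c$. So Bob wins as soon as some $N[v]$ is entirely one color. It therefore suffices to give a graph and a Bob strategy (with either player starting) that makes some closed neighborhood monochromatic. Any vertex Alice colors, in either color, will be treated as ``blocked''. The gadget of Proposition~\ref{proposition adjacent to two degree 1 implies game number 1} is the depth-$0$ case of the construction below: a vertex with two pendant neighbors.

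\textbf{Construction.} Let $T$ be a complete binary tree of depth $d$, so $|V(T)|=2^{d+1}-1$, and make $V(T)$ a clique. For each leaf $\ell$, let $P_\ell$ be the root-to-$\ell$ path in $T$. Add two new vertices $v_\ell,v'_\ell$, each adjacent exactly to the $d+1$ vertices of $P_\ell$. Then $N[v_\ell]=P_\ell\cup\{v_\ell\}$, and $\deg v_\ell=\deg v'_\ell=d+1$, while tree vertices have degree at least $2^{d+1}-2$. The order is $2^{d+1}-1+2^{d+1}$. To hit exactly $n$ vertices, take $d=\lfloor\log_2 n\rfloor-2$ and add padding vertices, each joined to all of $V(T)$. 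Padding vertices have degree $|V(T)|\ge d+1$, and they only add new closed neighborhoods, which cannot help Alice. Hence $\delta=d+1=\Theta(\log n)$.

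\textbf{Bob's strategy.} For a tree node $u$, define its \emph{region} as the subtree rooted at $u$ together with all $v_\ell,v'_\ell$ for leaves $\ell$ below $u$.

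On Bob's first move:
\begin{enumerate}[label=(\roman*)]
\item If the root is uncolored, Bob colors it $1$ and fixes $c=1$.
\item If Alice has already colored the root some color, Bob fixes $c$ to be that color and treats the root as his own. He then immediately performs a descent step from the root (described next) on this same move.
\end{enumerate}

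Bob maintains the following invariant after each of his moves: he is at a node $u$, every vertex of the root-to-$u$ path has color $c$, and at most one vertex of $u$'s region below $u$ is colored. After Alice's next move, at least one child $w$ of $u$ therefore has an entirely uncolored region. This holds because Alice has made at most one move since Bob's last move, and the two child regions are disjoint. The only exception is the very first move when Alice starts, where she touched at most one vertex off the root.

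A descent step is simply this: Bob colors $w$ with $c$ and moves to $w$.

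When Bob reaches a leaf $\ell$, the path $P_\ell$ is entirely colored $c$, and $v_\ell,v'_\ell$ were uncolored before Alice's last move. Alice can then color at most one of them, and Bob colors the other one $c$. Now $N[v_\ell]$ or $N[v'_\ell]$ is monochromatic, so the other color does not dominate, and the rest of the game is irrelevant. This gives $\domg(G_n,X)=1$ for both $X\in\{A,B\}$.

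\textbf{Main obstacle.} The step needing the most care is the bookkeeping of this invariant across the two starting orders. In particular, one must check that Alice's single move between Bob's moves can spoil at most one child region. One must also check the case where Alice colors the root herself, which is why Bob adopts her color rather than insisting on color $1$.
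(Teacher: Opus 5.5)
\paragraph{The gap: rule (i) loses when Alice opens off the root.}
Your construction is sound, but your strategy for Bob fails in the Alice-first game when her opening move is not at the root. The ``exception'' you mention is exactly where the invariant argument breaks. After rule (i), up to one vertex below the root is already colored (Alice's opening move), and her next move colors a second one. These two vertices can lie in different child regions. So ``at most one colored vertex below $u$'' does not give a child whose region is entirely uncolored.

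This is not just a bookkeeping slip; rule (i) actually loses. Take $d\ge 1$:
\begin{itemize}
\item Alice opens by coloring a child $w_1$ of the root with $2$.
\item Rule (i) makes Bob color the root $1$.
\item Alice colors the other child $w_2$ with $2$.
\end{itemize}
Now every closed neighborhood of $G_n$ contains the root and one of $w_1,w_2$. This covers each $P_\ell\cup\{v_\ell\}$ and $P_\ell\cup\{v'_\ell\}$, as well as every neighborhood containing the clique $V(T)$. So both color classes already dominate and Alice has won.

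\paragraph{The repair.}
Handle Alice's color the way you already do in rule (ii). If Alice opens anywhere, Bob takes $c$ to be the color she used, and colors the root $c$ if it is still uncolored. Only vertices colored with the other color $\bar c$ count as blocked. The invariant should read: the root-to-$u$ path is colored $c$, and no vertex of $u$'s region below $u$ is colored $\bar c$.
\begin{itemize}
\item Alice's single move creates at most one $\bar c$-vertex there, so some child region is $\bar c$-free.
\item Bob colors the top vertex of that region $c$. If that vertex is already colored $c$, he passes through it and keeps descending within the same move, since everything below it is $\bar c$-free.
\item At a leaf $\ell$, at most one of $v_\ell,v'_\ell$ is colored $\bar c$. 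The other is either already $c$ or Bob colors it $c$, making its closed neighborhood monochromatic.
\end{itemize}

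\paragraph{Comparison with the paper.}
With this change, your tree-descent argument is a valid alternative to the paper's proof. The paper uses Zelinka's bipartite graph on $S=[2(k+2)]$ and $\binom{S}{k}$. There Bob colors $k+2$ uncolored vertices of $S$ with $1$, then colors $1$ an uncolored $k$-subset of them. That counting strategy needs no adaptation to Alice's colors or to who starts, whereas your route needs the adaptive choice of $c$.
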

	
	We note that the construction below is based on the construction in~\cite{Z1983} of a graph with large minimum degree and (non-game) domatic number $2$.
	
	\begin{proof}
		First we will describe a construction $G_n$ on $n$ vertices with $\delta(G_n)=\Omega(\log n)$.  For each $k\in\mathbb{N}$, $k\geq 2$, define $n_k:=2(k+2)+\binom{2(k+2)}{k}$. 
		
		Let $G_1:=K_1$ and $G_2:=2K_1$. If $3\leq n<n_2=36$, then let $G_n$ be any graph on $n$ vertices with $\delta(G_n)\geq 1$ which contains a vertex adjacent to two degree $1$ vertices (for example $K_{n-2}$, with two additional vertices joined to the same vertex in the clique). 
		
		Now, if $n\geq n_2$, let $k$ be the largest integer such that $n\geq n_k$. Let $S:=[2(k+2)]$, $\mathcal{T}:=\binom{[2(k+2)]}{k}$ and $U$ a set of size $n-n_k$ disjoint from $S$ and $\mathcal{T}$. Let $G_n$ be the bipartite graph with partite sets $S$ and $\mathcal{T}\cup U$ with the following edges: If $s\in S$ and $T\in \mathcal{T}$, $sT\in E(G_{n_k})$ if and only if $s\in T$. If $s\in S$ and $u\in U$, then $su\in E(G_n)$ if and only if $1\leq s\leq k$. See Figure~\ref{figure drawing of G134} for a sketch of $G_n$ when $n=134$.
		
		We note that
		\[
		|V(G_n)|=|S|+|\mathcal{T}|+|U|=2(k+2)+\binom{2(k+2)}{k}+n-n_k=n_k+n-n_k=n.
		\]
		Furthermore, $\delta(G_n)=k$ as every vertex in $\mathcal{T}$ or $U$ has degree $k$, whereas the vertices in $S$ have degree at least $\binom{2(k+2)-1}{k-1}\geq k$. Furthermore, by the definition of $k$, we have that
		\[
		n<n_{k+1}=2(k+3)+\binom{2(k+3)}{k}\leq \binom{2(k+3)}{k+3}\leq 4^{k+3},
		\]
		so in particular
		\[
		\delta(G_n)=k\geq \log_4 n-3=\Omega(\log n).
		\]
		
		\begin{figure}
			\begin{center}
				\begin{tikzpicture}
					\draw[fill=black] (-0.5,-2) circle (2pt);
					\draw[fill=black] (0.5,-2) circle (2pt);
					\draw[fill=black] (1.5,-2) circle (2pt);
					\draw[fill=black] (2.5,-2) circle (2pt);

					\draw[fill=black] (0,0) circle (2pt);
					\draw[fill=black] (1,0) circle (2pt);
					\draw[fill=black] (2,0) circle (2pt);
					\draw[fill=black] (3,0) circle (2pt);
					\draw[fill=black] (4,0) circle (2pt);
					\draw[fill=black] (5,0) circle (2pt);
					\draw[fill=black] (6,0) circle (2pt);
					\draw[fill=black] (7,0) circle (2pt);
					\draw[fill=black] (8,0) circle (2pt);
					\draw[fill=black] (9,0) circle (2pt);

					\draw[fill=black] (0.5,3) circle (2pt);
					\draw[fill=black] (1.5,3) circle (2pt);
					\draw[fill=black] (2.5,3) circle (2pt);
					\draw[fill=black] (3.5,3) circle (2pt);
					\draw[fill=black] (4.5,3) circle (2pt);
					\draw[fill=black] (5.5,3) circle (2pt);
					\draw[fill=black] (-0.5,3) circle (2pt);
					\draw[fill=black] (-1.5,3) circle (2pt);
					\draw[fill=black] (8.5,3) circle (2pt);
					\draw[fill=black] (9.5,3) circle (2pt);
					\draw[fill=black] (10.5,3) circle (2pt);
					
					\node at (-1.5,3.5) {\small$\{1,2,3\}$};
					
					\node at (2.5,3.5) {\small$\{1,2,7\}$};
					
					\node at (0.5,3.5) {$\textbf{\dots}$};
					
					\node at (4.5,3.5) {$\textbf{\dots}$};
					
					\node at (10.5,3.5) {\small$\{8,9,10\}$};
					
					\node at (-.3,0) {$1$};
					\node at (.7,0) {$2$};
					\node at (1.7,0) {$3$};
					\node at (2.7,0) {$4$};
					\node at (3.7,0) {$5$};
					\node at (4.7,0) {$6$};
					\node at (5.7,0) {$7$};
					\node at (6.7,0) {$8$};
					\node at (7.7,0) {$9$};
					\node at (8.7,0) {$10$};
					
					\node at (-2,3) {$\mathcal{T}$ {\large$[$}};
					\node at (11,3) {{\large$]$}};
					
					\node at (-0.8,0) {$S$ {\large$[$}};
					\node at (9.5,0) {{\large$]$}};

					\node at (-1,-2) {$U$ {\large$[$}};
					\node at (3,-2) {{\large$]$}};

					\node at (7,3) {\large$\textbf{\dots}$};
					
					\draw (0,0)--(-0.5,-2)--(1,0)--(0.5,-2)--(2,0)--(1.5,-2)--(0,0);
					
					\draw (0,0)--(0.5,-2);
					\draw (1,0)--(1.5,-2);
					\draw (2,0)--(-0.5,-2);
					\draw (0,0)--(2.5,-2)--(1,0);
					\draw (2,0)--(2.5,-2);

					\draw (0,0)--(-1.5,3)--(1,0);
					\draw (-1.5,3)--(2,0);
					
					\draw (0,0)--(2.5,3)--(1,0);
					\draw (2.5,3)--(6,0);
					\draw (7,0)--(10.5,3)--(8,0);
					\draw (9,0)--(10.5,3);
				\end{tikzpicture}
				\caption{A drawing of $G_{134}$. Since $n_3\leq 134<n_4$, we have that $k=3$. This gives us that $|\mathcal{T}|=\binom{10}{3}=120$, while $|S|=10$ and $|U|=134-n_3=4$.}\label{figure drawing of G134}
			\end{center}
		\end{figure}
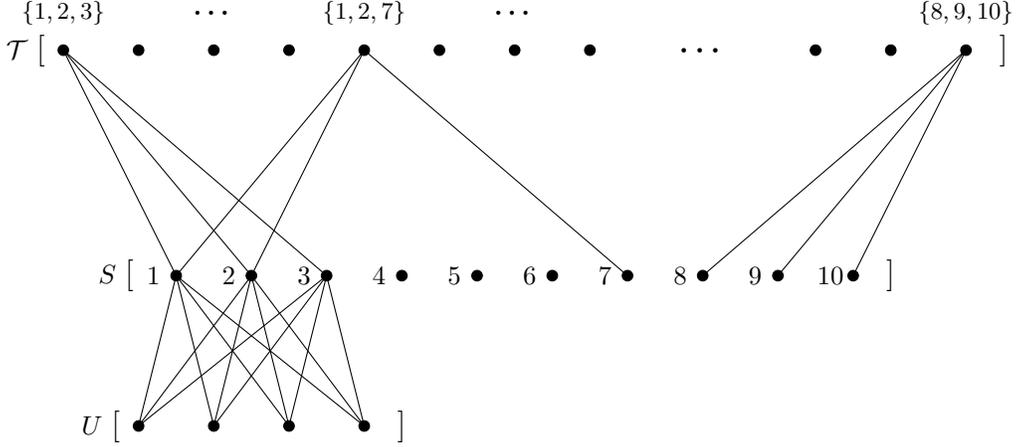

		Now we bound the domatic game number of $G_n$. If $n=1$ or $n=2$, the result is immediate. If $3\leq n<n_2$, then $G_n$ contains a vertex adjacent to two degree $1$ vertices, so by Proposition~\ref{proposition adjacent to two degree 1 implies game number 1} we are done. If $n_k\leq n<n_{k+1}$ for some $k\geq 2$, we use the following strategy for Bob in the $(G_n,X,2)$-game:
		
		For the first $k+2$ turns, Bob will arbitrarily choose an uncolored vertex from $S$ and color it $1$. On turn $k+3$, Bob will choose an uncolored vertex $T^*\in \mathcal{T}$ such that every vertex in $N(T^*)$ is colored $1$, and color $T^*$ the color $1$. This is always possible since at this stage there are at least $k+2$ vertices in $S$ colored $1$, and thus there are at least $\binom{k+2}k>k+2$ vertices $T\in \mathcal{T}$ whose neighborhood is completely colored $1$, at least one of which has not been colored yet. Thus, Alice loses the $(G_n,X,2)$-game, so $\domg(G_n,X)=\domg(G_n,X)=1$.	
	\end{proof}
	
	In the proof above, we had that $G_{n_k}$ was a graph with $\delta(G_{n_k})=k$ and $\domg(G,X)=1$. This gives us the following corollary, showing that any integer can be realized as the minimum degree of a graph with domatic game number $1$ (regardless of whom goes first).
	
	\begin{corollary}
		For all $k\in\mathbb{N}$, there exists a graph $G$ such that $\delta(G)=k$ and $\domg(G,X)=1$.
	\end{corollary}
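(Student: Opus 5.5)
The plan is to read the corollary off the construction in the proof of Theorem~\ref{theorem graphs with large minimum degree but small domatic game number}, treating small $k$ separately.

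\textbf{Case $k\geq 2$.} Take $n=n_k$. Then $k$ is the largest integer with $n\geq n_k$, because $n_{k+1}>n_k$: indeed $2(k+3)>2(k+2)$ and $\binom{2(k+3)}{k+1}>\binom{2(k+2)}{k}$. Hence $U=\emptyset$, and $G_{n_k}$ is the bipartite graph between $S=[2(k+2)]$ and $\mathcal{T}=\binom{[2(k+2)]}{k}$. The degree computation in that proof gives $\delta(G_{n_k})=k$: every $T\in\mathcal{T}$ has degree exactly $k$, and every $s\in S$ has degree $\binom{2k+3}{k-1}\geq k$. Bob's strategy from that proof shows $\domg(G_{n_k},X)=1$ for either $X$. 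I will recheck this strategy in the case $U=\emptyset$, where nothing changes. Bob colors $k+2$ vertices of $S$ with color $1$. This is possible since the first $2(k+2)$ moves cannot exhaust $S$ before Bob has made $k+2$ moves there; or, if Alice also colors $S$ vertices $1$, this only helps Bob. A subtlety is that Alice may color some $S$ vertices $2$. Bob only needs $k$ of his own $1$-colored vertices, and then any $T$ consisting of $1$-colored vertices has its whole neighborhood colored $1$. There are $\binom{k+2}{k}$ such $T$, while Alice has colored at most $k+2$ vertices of $\mathcal{T}$ so far. Since $\binom{k+2}{k}>k+2$ for $k\geq 2$, Bob can find an uncolored such $T^*$ and color it $1$. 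Then color $2$ never dominates $T^*$, so Bob wins with palette size $2$. The palette size $1$ is trivially winnable for Alice, since all vertices receive color $1$.

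\textbf{Case $k=1$.} The construction needs $k\geq2$, so I take the example used for $3\leq n<n_2$: a graph such as the path $P_3$, or a star $K_{1,2}$. It has minimum degree $1$ and a vertex adjacent to two degree-$1$ vertices, so Proposition~\ref{proposition adjacent to two degree 1 implies game number 1} gives $\domg=1$. If $\mathbb{N}$ includes $0$, then $K_1$ handles $k=0$.

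The only real obstacle is confirming that $n_{k+1}>n_k$, so that $k$ is selected exactly, and that Bob's counting argument $\binom{k+2}{k}>k+2$ survives Alice's interference. Both are routine.
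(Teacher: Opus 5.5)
Your proposal is correct and follows the paper's route: the corollary is read off from $G_{n_k}$ in the proof of Theorem~\ref{theorem graphs with large minimum degree but small domatic game number}. Your check that $n_{k+1}>n_k$ (so $U=\emptyset$) and your separate treatment of $k=1$ via $P_3$ and Proposition~\ref{proposition adjacent to two degree 1 implies game number 1} fill in details the paper leaves implicit.

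One small slip, which the paper shares: when Alice moves first she has made $k+3$ moves before Bob's $(k+3)$rd turn, not $k+2$. Since $\binom{k+2}{k}\geq k+4$ for $k\geq 2$, an uncolored candidate $T^*$ still exists and the argument goes through.
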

	
	In the proof of Theorem~\ref{theorem main theorem on degree}, we'll need the following classical result on Maker-Breaker games. Recall that the Maker-Breaker game on the hypergraph $H$ is a $2$-player game played between players Maker and Breaker, where players take turns selecting previously unselected vertices from $H$. If at some point Maker selects all the vertices in some edge of $H$, Maker wins. Otherwise, Breaker wins. We make use of the following classical result, proven by Erd\H{o}s and Selfridge~\cite{ES1973}
	
	\begin{theorem}[\cite{ES1973}]\label{theorem Erdos selfridge}
		Let $H$ be a hypergraph. If
		\[
		\sum_{e\in E(H)}2^{-|e|}<\frac{1}{2}
		\]
		then Breaker has a winning strategy in the unbiased Maker-Breaker game on $H$ when either Maker or Breaker goes first.
	\end{theorem}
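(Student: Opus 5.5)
We prove this with the standard potential-function (weight) argument of Erd\H{o}s and Selfridge. At any point in the game, call an edge $e\in E(H)$ \emph{live} if Breaker has not selected any vertex of $e$. For a live edge $e$, let $r(e)$ be the number of vertices of $e$ not yet selected by either player. Define the potential
\[
\Phi:=\sum_{e\text{ live}}2^{-r(e)} .
\]
For an unselected vertex $v$, define its weight $w(v):=\sum_{e\text{ live},\,v\in e}2^{-r(e)}$. If Maker ever claims all of some edge $e$, then $e$ is live with $r(e)=0$, so $\Phi\geq 2^0=1$. It therefore suffices to give Breaker a strategy under which $\Phi<1$ whenever Maker has just moved. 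If the game ends with every vertex selected and no edge fully claimed by Maker, Breaker wins by definition.

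Breaker's strategy is greedy: on each turn he selects an unselected vertex $u$ of maximum weight $w(u)$. We track how $\Phi$ changes over the moves.
\begin{enumerate}[label=(\roman*)]
\item A Breaker move at $u$ kills every live edge containing $u$, so $\Phi$ decreases by exactly $w(u)$. It does not change $r(e)$ for any edge that stays live.
\item A Maker move at $v$ decreases $r(e)$ by one for each live edge $e\ni v$, which doubles those terms. So $\Phi$ increases by exactly the current weight of $v$.
\end{enumerate}
Consider a Breaker move at $u$ followed by a Maker move at $v$. Let $w'(v)$ be the weight of $v$ after Breaker's move. Breaker's move only kills edges, so $w'(v)\leq w(v)$, where $w(v)$ is computed just before Breaker moved. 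By the greedy choice, $w(v)\leq w(u)$. The net change in $\Phi$ over the pair of moves is therefore $w'(v)-w(u)\leq 0$. If Breaker moves last with no Maker reply, $\Phi$ only decreases.

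It remains to check the start of the game.
\begin{itemize}
\item If Breaker moves first, then $\Phi\leq\sum_e 2^{-|e|}<\frac12$ initially. After that the potential never increases across a Breaker--Maker pair of moves.
\item If Maker moves first, his first move at least doubles no more than every term, so afterwards $\Phi\leq 2\sum_e2^{-|e|}<1$. From then on play proceeds in Breaker--Maker pairs, each of which does not increase $\Phi$.
\end{itemize}
In either case $\Phi<1$ after every Maker move, so Maker never completes an edge.

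The only delicate point is the comparison $w'(v)\leq w(u)$. Maker's gain must be measured after Breaker's move, on the edges still live at that time. These edges are a subset of the edges counted in $w(v)$ before Breaker's move, and those terms are unchanged. Edges containing both $u$ and $v$ are killed and do not contribute. The hypothesis rules out an empty edge, since that would contribute $2^0=1\geq\frac12$ to the sum.
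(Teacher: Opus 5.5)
Your proof is correct and matches the approach behind the citation: it is the classical Erd\H{o}s--Selfridge potential-function argument (greedy Breaker, each Breaker--Maker pair non-increasing in $\Phi$, and $\Phi\le 2\sum_e 2^{-|e|}<1$ after a first Maker move), and the paper itself gives no proof but cites~\cite{ES1973}, whose argument this is. Your pairing also shows directly that a skipped Maker turn (a lone Breaker move) only lowers $\Phi$, which proves the strengthened form used in Corollary~\ref{corollary our use of maker breaker} without the paper's separate remark that skipping is never optimal. The only blemish is the garbled phrase ``at least doubles no more than every term'', which should read ``at most doubles each term''.
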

	
	The following is a direct consequence of Theorem~\ref{theorem Erdos selfridge}, but in a form which will be more useful for our purposes.
	
	\begin{corollary}\label{corollary our use of maker breaker}
		Let $H$ be a hypergraph with at most $n$ edges, each of size at least $\log_2 n+2$. Then Breaker has a winning strategy on $H$ in the unbiased Maker-Breaker game, even if Maker can skip turns.
	\end{corollary}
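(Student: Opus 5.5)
The plan is to check the hypothesis of Theorem~\ref{theorem Erdos selfridge} directly, and then to handle the two extra features of the statement: who moves first, and Maker's ability to skip turns. Suppose $H$ has at most $n$ edges, each of size at least $\log_2 n+2$. Then
\[
\sum_{e\in E(H)}2^{-|e|}\leq n\cdot 2^{-\log_2 n-2}=\frac14<\frac12 .
\]
So Breaker wins the ordinary unbiased game by Theorem~\ref{theorem Erdos selfridge}, whether Maker or Breaker moves first. The margin $\frac14$ is deliberate: it leaves slack for the skipping issue.

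The main obstacle is allowing Maker to pass. A skip effectively gives Breaker an extra move, which intuitively can only help Breaker, but a strategy-stealing style argument is needed to make this rigorous. The cleanest route avoids that and goes through the Erdős–Selfridge potential argument itself. Define the potential $\Phi=\sum_{e}2^{-|e\setminus M|}$, summed over edges $e$ containing no Breaker vertex, where $M$ is the set of Maker's vertices. Breaker always picks the free vertex $v$ maximizing $\sum_{e\ni v}2^{-|e\setminus M|}$ over such edges.

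The standard computation then runs as follows. Over a Breaker move followed by a Maker move, $\Phi$ does not increase. A Maker move alone at most doubles the terms through the chosen vertex, and Breaker's previous choice dominates that increase. Now consider a skip. Either Maker skips and Breaker moves again, which removes nonnegative mass, or Maker skips on the first move, which changes nothing. In both cases $\Phi$ does not increase. If Maker moves first, the initial Maker move at most doubles $\Phi$, giving $\Phi\leq 2\cdot\frac14=\frac12<1$.

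Maker winning would require some surviving edge with $e\subseteq M$, which contributes $2^0=1$ to $\Phi$. Since $\Phi<1$ throughout, Maker never completes an edge, so Breaker wins.

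An alternative I would consider is a reduction: replace each skip by Maker claiming a fresh dummy vertex that lies in no edge. To keep the board large enough, add $|V(H)|$ isolated dummy vertices. Maker's play in this padded game is legal, and Breaker's Erdős–Selfridge strategy on the padded hypergraph has the same edges, so it wins there. One caveat is that the Breaker strategy might tell Breaker to take a dummy vertex, but that is harmless. Another is that Breaker may run out of real vertices, at which point the game is already over. I would present the potential version as the primary argument, since it treats skips transparently, and mention the dummy-vertex reduction as an equivalent viewpoint.
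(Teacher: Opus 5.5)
Your proof is correct, but it handles the skipping issue differently from the paper. The paper disposes of skips in one sentence. It asserts that skipping is never optimal for Maker (a monotonicity/strategy-stealing fact it does not prove), so allowing skips does not change the winner. It then applies Theorem~\ref{theorem Erdos selfridge} as a black box to the bound $\sum_e 2^{-|e|}\le \frac14<\frac12$.

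You instead open up the Erd\H{o}s--Selfridge argument and check directly that the greedy potential strategy survives skips. Every actual Maker move is still immediately preceded by a greedy Breaker move, so the pairing inequality (Maker's gain $\le w(u)\le w(v)=$ Breaker's loss) still applies. Unpaired Breaker moves only lower $\Phi$. The only doubling is Maker's opening move, which gives $\Phi\le\frac12<1$.

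Your route buys an explicit, self-contained Breaker strategy for the game with passes, with no appeal to an unstated monotonicity principle. That is exactly the form Alice uses in the proof of Theorem~\ref{theorem minimum degree formal theorem}, where her extra moves in $H_i$ are read as Maker passing. The paper's route is shorter and keeps Erd\H{o}s--Selfridge as a black box.

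Two small corrections. First, the slack between $\frac14$ and $\frac12$ is not consumed by skips: passes cost nothing in your potential, and the factor $2$ comes only from Maker moving first. Second, in your dummy-vertex alternative, calling it ``harmless'' for Breaker to be told to take a dummy needs a justification. The greedy rule picks a weight-$0$ dummy only when every free real vertex also has weight $0$. Since $\Phi<1$ rules out live edges lying entirely in $M$, there are then no live edges left, so Breaker has already won.
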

	
	\begin{proof}
		First we note that it is never optimal for Maker to skip a turn, so giving them the power to do so does not change which games are Maker-win vs. Breaker-win. Now, we can verify
		\[
		\sum_{e\in E(H)}2^{-|e|}\leq n\cdot 2^{-\log_2 n-2}=\frac{1}{4}<\frac{1}{2}.
		\]
		Thus, by Theorem~\ref{theorem Erdos selfridge}, Breaker has a winning strategy in the unbiased Maker-Breaker game regardless of who goes first.
	\end{proof}
	
	We need the following technical lemma, which will help provide a crucial estimate in the final proof of Theorem~\ref{theorem main theorem on degree}.
	
	\begin{lemma}\label{lemma increasing function estimate}
		Let $n\geq 2$ be an integer and $1\leq r\leq n$, $0<c\leq \frac{\log 2}{4}$ real numbers. Set $k:=\lceil cr\rceil $ and define $f:\mathbb{N}\cup \{0\}\to\mathbb{R}$ by
		\[
		f(i)=\left(\frac{e\cdot r\log n}{i(k-1)}\right)^i.
		\]
		Then $f$ is increasing for $i\leq \log_2 n+2$.
	\end{lemma}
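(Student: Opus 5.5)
The plan is to study the continuous extension of $f$ and show its logarithm has nonnegative derivative on the relevant range. Write $A:=\frac{e\, r\log n}{k-1}$, so $f(i)=(A/i)^i$. We assume $k\geq 2$, since otherwise $k-1=0$ and $f$ is not defined. Note $k=\lceil cr\rceil\geq 1$ because $cr>0$. If the case $k=1$ has to be covered, we adopt the convention that $f\equiv+\infty$ there and regard the statement as vacuous.

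Define $g(x):=x(\log A-\log x)$ for $x>0$, and set $g(0):=0$. This makes $g$ continuous on $[0,\infty)$, since $x\log x\to 0$, and it matches the convention $f(0)=0^{-0}=1$. Then $f(i)=e^{g(i)}$, so it suffices to show $g$ is increasing on the relevant interval. We compute
\[
g'(x)=\log A-\log x-1=\log\frac{A}{ex},
\]
which is positive for $0<x<A/e$. So $g$, and hence $f$, is strictly increasing on $[0,A/e]$. It remains to check that $A/e$ exceeds every integer argument we need. To compare $f(i)$ with $f(i+1)$ for all integers $0\le i\le \log_2 n+2$, it suffices that $\log_2 n+3\le A/e$.

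The key estimate uses the hypothesis on $c$. Since $k=\lceil cr\rceil<cr+1$, we get $k-1<cr\le \frac{r\log 2}{4}$. Therefore
\[
\frac{A}{e}=\frac{r\log n}{k-1}>\frac{4\log n}{\log 2}=4\log_2 n .
\]
Because $n\ge 2$ gives $\log_2 n\ge 1$, we have $4\log_2 n\ge \log_2 n+3$. Combining the two inequalities gives $A/e>\log_2 n+3$, which completes the argument. The hypothesis $r\le n$ is not needed for this lemma.

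I expect no step to be a genuine obstacle. The only points needing care are the degenerate values: $i=0$, handled by the continuous extension of $g$ to $0$, and $k=1$, handled by the convention above. The condition $c\le\frac{\log 2}{4}$ is what converts the natural-log bound into the base-$2$ threshold with enough slack.
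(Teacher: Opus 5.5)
Your proof is correct, but it takes a different route from the paper's. The paper argues discretely. It computes $f(i)/f(i+1)=(1+1/i)^i\,\frac{(i+1)(k-1)}{e\,r\log n}$, bounds $(1+1/i)^i\le e$ and $k-1\le cr$ to get $f(i)/f(i+1)\le \frac{c(i+1)}{\log n}$, and finishes with $c\le\frac{\log 2}{4}\le\frac{\log n}{\log_2 n+3}$. You instead extend $f$ to real arguments via $g(x)=x(\log A-\log x)$ and use $g'(x)=\log\frac{A}{ex}$.

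Both arguments reduce to the same threshold $i+1\le\frac{r\log n}{k-1}$ and the same comparison $4\log_2 n\ge\log_2 n+3$, so the mathematical content is essentially equivalent. The paper's version avoids calculus. Yours gains two things:
\begin{itemize}
\item It handles $i=0$ cleanly; the paper's ratio formula is undefined there.
\item It shows the real extension of $f$ is increasing on all of $[0,\log_2 n+3]$.
\end{itemize}
The second point is what the subsequent Claim actually uses. There every summand is bounded by $f$ evaluated at $\log_2 n+2$, which is generally not an integer, and a bound on $f(i)/f(i+1)$ at integers does not by itself compare $f(i)$ with that value.

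Your side remarks are also sound. The $k=1$ convention is unnecessary: the lemma is only applied with $k\ge 2$, and the paper's proof implicitly assumes this as well. You are right that $r\le n$ plays no role.
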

	
	\begin{proof}
		We have that
		\[
		\frac{f(i)}{f(i+1)}=\frac{\left(\frac{e\cdot r\log n}{i(k-1)}\right)^i}{\left(\frac{e\cdot r\log n}{(i+1)(k-1)}\right)^{i+1}}=\left(1+\frac{1}{i}\right)^i\frac{(i+1)(k-1)}{e\cdot r\log n}
		\]
		Note that $(1+1/i)^i\leq e$ for all $i\in\mathbb{N}$, and $k-1\leq cr$, so we have 
		\[
		\left(1+\frac{1}{i}\right)^i\frac{(i+1)(k-1)}{e\cdot r\log n}\leq \frac{c(i+1)}{\log n}.
		\]
		Then since $c\leq \frac{\log 2}{4}\leq\frac{\log n}{\log_2 n+3}$, the above expression is at most $1$ when $i\leq \log_2 n+2$, so $f(i)$ is increasing in the desired range.
	\end{proof}
	
	We are now ready to finish the proof of Theorem~\ref{theorem main theorem on degree}. The result below (along with Theorem~\ref{theorem graphs with large minimum degree but small domatic game number}) immediately implies Theorem~\ref{theorem main theorem on degree}.
	
	\begin{theorem}\label{theorem minimum degree formal theorem}
		There exists a constant $c>0$ such that if $G$ is a graph on $n\geq 2$ vertices, then
		\[
		\domg(G,X)\geq c\cdot \frac{\delta(G)}{\log n}.
		\]
	\end{theorem}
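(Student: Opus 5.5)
Set $r:=\delta(G)$ and $k:=\lceil c r/\log n\rceil$, and let Alice play as \emph{Breaker} in a single Maker--Breaker game on an auxiliary hypergraph $H$. The lemma's parameters suggest $r$ is the rescaled degree and $k$ the palette size. (If $k=1$ there is nothing to prove, since Alice trivially wins with palette size $1$.) Alice will partition her moves so that each vertex of $G$ she colours is coloured in a way that keeps every colour available near every vertex.

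The hypergraph $H$ is defined as follows. For each vertex $v$ and each colour $j\in[k]$, the bad event is that $N[v]$ receives no colour $j$. Because of the ceiling in $k$, I would choose the vertex set of $H$ to be the vertices of $G$. For each $v$, take as edges of $H$ all $i$-subsets $I\subseteq N(v)$ with $i$ around $\log_2 n+2$; the count of such edges is where Lemma~\ref{lemma increasing function estimate} and the quantity $(e r\log n/(i(k-1)))^i$ enter. Bob plays Maker. Whenever Bob colours a vertex $u$, Alice responds via the Breaker strategy of Corollary~\ref{corollary our use of maker breaker}, claiming some vertex $w$. She then colours $w$ with a colour absent from the neighbourhoods she is protecting. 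Bob may also colour vertices that are useless to him, which is why the "Maker can skip turns" clause is needed.

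The real mechanism I would aim for is this. Alice wants each closed neighbourhood $N[v]$, of size at least $r+1$, to see all $k$ colours. Split $N(v)$ into Bob-coloured and Alice-coloured vertices. Breaker's win guarantees that Bob cannot own all of any edge, so within every $N(v)$ Alice owns a large set. The cleaner way is to let each edge of $H$ be a set of $\log_2 n+2$ vertices in some $N(v)$. Then "Maker never fully claims an edge" means Bob claims fewer than $\log_2 n+2$ vertices of any designated block. So Alice should fix, for each $v$, a disjoint family of blocks partitioning $N(v)$ into roughly $r/(\log_2 n+2)\ge k$ pieces, and add each block as an edge of $H$. This gives at most $n\cdot r$ edges, which is fine after adjusting constants with Lemma~\ref{lemma increasing function estimate}. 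Alice then gets a vertex in each block. She colours her vertex in block $b$ of $N(v)$ with the colour assigned to $b$, cycling through $[k]$.

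The main obstacle is consistency. One vertex $w$ lies in many neighbourhoods, with conflicting colour demands, so a naive block assignment fails. To handle this I would choose the colour for $w$ randomly (or greedily via a weighting potential in the style of Erdős–Selfridge). The count of bad configurations, namely a vertex $v$, a colour $j$, and an $i$-set of Alice's vertices in $N(v)$ all avoiding $j$, is bounded by $\binom{r}{i}(1-1/k)^i\le (e r/i)^i e^{-i/k}$. Lemma~\ref{lemma increasing function estimate} controls this via monotonicity in $i$, so the sum over $v$, $j$, $i$ stays below $1$ and a derandomized potential argument shows Alice can always keep the total potential under $1$. Combining the Breaker guarantee (Alice owns many vertices in each $N(v)$) with this potential argument (her colours cover all $k$ colours) gives $\domg(G,X)\ge k\ge c\,\delta(G)/\log n$.
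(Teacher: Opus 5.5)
There is a genuine gap, and it is exactly at the step you flag as the main obstacle. In your scheme, Breaker plays Erd\H{o}s--Selfridge on blocks of size $\Theta(\log n)$ inside each $N(v)$. This only guarantees one Alice-vertex per block, so Alice owns just $m=\Theta(\delta(G)/\log n)$ vertices in each neighbourhood. That is only a constant multiple of $k=\lceil c\,\delta(G)/\log n\rceil$.

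If you colour these vertices online at random, or by any potential argument that mimics random colouring, a fixed pair $(v,j)$ fails with probability about $(1-1/k)^m\approx e^{-m/k}$. This is a constant independent of $n$, so the union bound over the $nk$ pairs $(v,j)$ fails. To make it work you would need $m\gtrsim k\log(nk)$, which only yields $\Omega(\delta(G)/\log^2 n)$.

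Your count $\binom{r}{i}(1-1/k)^i$ does not rescue this. For $i$ near $m$ it is enormous, and Lemma~\ref{lemma increasing function estimate} is not about that quantity. Separately, your hypergraph has up to $n\,\delta(G)$ edges, so Corollary~\ref{corollary our use of maker breaker} does not apply as stated. This last point is fixable by doubling the block size, but it is not the real problem.

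The missing idea is to fix the colours \emph{before} the game, not after Alice claims vertices.
\begin{enumerate}
\item Partition $V(G)$ uniformly at random into $V_1,\dots,V_k$. Trim each closed neighbourhood to exactly $\delta(G)$ vertices; then each part expects about $\log n/c$ of them.
\item Lemma~\ref{lemma increasing function estimate} is precisely what bounds the lower tail $\Pr(|N[v]\cap V_j|<\log_2 n+2)$ of this binomial variable. A union bound over the $nk$ pairs then shows that, for small $c$, some partition has $|N[v]\cap V_j|\ge \log_2 n+2$ for all $v$ and $j$.
\item Alice then plays $k$ separate Breaker games on $H_j=\{N[v]\cap V_j\}$, each with at most $n$ edges. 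Every vertex she claims in $V_j$ gets colour $j$.
\item She answers in the part where Bob just moved. If that part is exhausted, she moves in another part, which that part's game sees as a Maker skip. This is why the skip clause is needed; in your single game every Bob move is a genuine move, so the clause plays no role there.
\end{enumerate}
Because a vertex's colour depends only on its part, the consistency problem never arises. Breaker's win on each $H_j$ then places colour $j$ in every $N[v]$.
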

	
	\begin{proof}
		We will prove this only in the case when $n$ is large enough (i.e. when there exists some fixed $n_0$ such that the proof works for all $n\geq n_0$), noting that this suffices, since we may always choose $c$ small enough that for $n<n_0$, $c\cdot \frac{\delta(G)}{\log n}<1$, making the result true. 
		
		Let $c>0$ be a constant which we will specify later. Let $r:=\frac{\delta(G)}{\log n}$ and $k:=\lceil cr\rceil$. If $k<2$, the result is trivial, so assume $k\geq 2$. Let $H^*$ denote the hypergraph with $V(H^*)=V(G)$ and
		\[
		E(H^*)=\{N[v]\mid v\in V(G)\}.
		\]
		Note that $H^*$ has at most $n$ edges, and each edge of $H^*$ is size at least $r\log n$. Let $H$ denote the hypergraph formed by replacing each edge $e\in E(H^*)$ with an edge $f\subseteq e$ such that $|f|=r\log n$ (if there are two edges $e,e'\in E(H^*)$ such that this process results in the same $f$ for both, then we can just add this single $f$ to $E(H)$). Then $H$ is a $(r\log n)$-uniform hypergraph with at most $n$ edges.
		
		\begin{claim}
			For $n$ large enough, there is a partition of $V(H)$ into $k$ parts, $V(H)=V_1\cup V_2\cup \dots\cup V_k$ such that $|e\cap V_i|\geq \log_2 n+2$ for all $e\in E(H)$, $i\in [k]$.
		\end{claim} 
		
		\begin{proof}
			Consider the random partition $V(H)=V_1\cup \dots\cup V_k$ such that each $v$ is uniformly and independently sorted into one of the sets $V_i$. Let $e\in E(H)$ and $i\in[k]$ be a fixed edge and index. We then calculate
			\begin{align*}
				\mathrm{Pr}(|e\cap V_i|<\log_2 n+2)&\leq \frac{1}{k^n}\sum_{i=0}^{\lfloor \log_2 n+2\rfloor}\binom{r\log n}{i}(k-1)^{r\log n -i}k^{n-r\log n}\\
				&=\left(1-\frac{1}{k}\right)^{r\log n}\sum_{i=0}^{\lfloor \log_2 n+2\rfloor}\frac{\binom{r\log n}{i}}{(k-1)^i}.
			\end{align*}
			Now, using the estimates $1-x\leq e^{-x}$, $\binom{a}{b}\leq \left(\frac{a\cdot e}{b}\right)^b$, along with Lemma~\ref{lemma increasing function estimate} (which applies as long as $c\leq \frac{\log 2}{4}$), we may write the following
			\begin{align*}
				\left(1-\frac{1}{k}\right)^{r\log n}\sum_{i=0}^{\lfloor \log_2 n+2\rfloor}\frac{\binom{r\log n}{i}}{(k-1)^i}&\leq \left(e^{-1/k}\right)^{r\log n}\sum_{i=0}^{\lfloor \log_2 n+2\rfloor}\left(\frac{e\cdot r\log n}{i(k-1)}\right)^i\\
				&\leq n^{-r/k}(\log_2 n+3)\left(\frac{e\cdot r\log n}{(\log_2 n+2)(k-1)}\right)^{\log_2 n+2}\\
				&\leq n^{-r/k}(\log_2 n+3)\left(\frac{e\log 2\cdot r}{k-1}\right)^{\log_2 n+2}.
			\end{align*}
			Let $\mathcal{E}$ be the event that there exists some $e\in E(H)$ and $i\in [k]$ such that $|e\cap V_i|< \log_2 n+2$. Using the union bound, we have that
			\[
			\mathrm{Pr}(\mathcal{E})\leq nk\cdot n^{-r/k}(\log_2 n+3)\left(\frac{e\log 2\cdot r}{k-1}\right)^{\log_2 n+2}.
			\]
			Now, using the estimates that $k\leq n$ and $\log_2 n+3\leq n$ for $n$ large, and using that for $k\geq 2$, $\frac{r}{k-1}\leq \frac{2}{c}$, we may write
			\begin{align*}
				nk\cdot n^{-r/k}(\log_2 n+3)\left(\frac{e\log 2\cdot r}{k-1}\right)^{\log_2 n+2}&\leq n^{3-1/c} \left(\frac{2e\log 2}{c}\right)^{\log_2 n+2}\\
				&=(2e\log 2)^2c^{-2}n^{3-1/c} n^{\log_2(2e\log 2)-\log_2(c)}\\
				&=(2e\log 2)^2c^{-2}n^{3+2e\log 2-1/c-\log_2(c)}.
			\end{align*}
			As $c\to 0^+$, we have that $-1/c-\log_2(c)\to -\infty$, so in particular we may choose $c>0$ small enough such that $3+2e\log 2-1/c-\log_2(c)<0$ (for example $c\leq 1/100$). Thus, if we choose $c=\min \{1/100,\log 2/4\}=1/100$, we have that $\mathrm{Pr}(\mathcal{E})\to 0$ as $n\to\infty$, so for $c=1/100$ and $n$ large enough, the desired partition exists.
		\end{proof}
		
		Now let us describe a winning strategy for Alice. For $i\in [k]$, let $H_i$ be the hypergraph induced by $H$ on $V_i$ (i.e. $V(H_i)=V_i$ and $E(H_i)=\{e\cap V_i\mid e\in E(H)\}$). Note that $H_i$ is a hypergraph with at most $n$ edges, each of size at least $\log_2 n+2$, so by Corollary~\ref{corollary our use of maker breaker}, Breaker has a winning strategy on $H_i$, even if Maker can skip turns. Alice will play according to these strategies as follows:
		\begin{itemize}
			\item If on Bob's last move, Bob plays in $V_i$ and there is an uncolored vertex in $V_i$, then according to Breaker's strategy on $H_i$, Alice will color a vertex in $V_i$ the color $i$.
			\item Otherwise, Alice will arbitrarily choose some $i$ such that $V_i$ has an uncolored vertex, and play according to Breaker's strategy on $H_i$ (interpreting this as a situation where Maker skipped their turn).
		\end{itemize}
		Alice as Breaker wins all $k$ of these Maker-breaker games, so every edge in $H$ receives every color in $[k]$. Thus, Alice wins the $(G,X,k)$-game as long as $n$ is large enough.
	\end{proof}
	
	One might wonder if the domatic game number and the domatic number can differ by much - theoretically if these two numbers were always very close to each other, then Theorem~\ref{theorem domatic number minimum degree maximum degree lower bound} would immediately imply Theorem~\ref{theorem main theorem on degree}. To motivate the significance of Theorem~\ref{theorem main theorem on degree} (and for independent interest), we present the following theorem, which shows the domatic number can grow even among graphs whose domatic game number is $1$.
	
	\begin{theorem}\label{theorem domatic number and domatic game number are far apart}
		For any $k\in\mathbb{N}$, there exists a graph $G$ with $\dom(G)\geq k$ and $\domg(G,X)=1$.
	\end{theorem}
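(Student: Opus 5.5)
We will build, for each $k\geq 2$, a graph that has an explicit domatic partition into $k$ classes but admits the same kind of Bob strategy as in the proof of Theorem~\ref{theorem graphs with large minimum degree but small domatic game number}. We cannot use Proposition~\ref{proposition adjacent to two degree 1 implies game number 1}, since a degree-$1$ vertex forces $\dom(G)\leq 2$. The case $k=1$ is trivial, for instance via $K_1$.

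\textbf{Construction.} Fix $k\geq 2$ and integers $m,M$ to be chosen; tentatively $m=3$ and $M=2mk+2$. Let $S=A_1\cup\dots\cup A_k$ be a clique, where the sets $A_i$ are disjoint and each has size $M$. Let $\mathcal{T}$ be the set of all transversals $\{a_1,\dots,a_k\}$ with $a_i\in A_i$. Add one new vertex $v_T$ for each $T\in\mathcal{T}$, adjacent exactly to the vertices of $T$; the $v_T$'s form an independent set.

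\textbf{Domatic number.} Define $D_i$ to be $A_i$ together with an arbitrary share of the vertices $v_T$, so that $D_1,\dots,D_k$ partition $V(G)$. Every vertex of $S$ is dominated by each $A_i$, since $S$ is a clique. Every $v_T$ has exactly one neighbour in each $A_i$. Hence each $D_i$ is dominating and $\dom(G)\geq k$.

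\textbf{Bob's strategy.} Fix a palette size $p\geq 2$ and either first player. On each of his first $mk$ turns, Bob colours an uncolored vertex of $S$ with colour $1$. He cycles through the parts so that afterwards each $A_i$ contains at least $m$ vertices of colour $1$. This is possible because Alice makes at most $mk+1$ moves in this period, so each $A_i$ still has uncoloured vertices when Bob needs one, provided $M>2mk+1$.

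At that point there are at least $m^k$ transversals $T$ whose vertices all have colour $1$. Alice has coloured at most $mk+1$ vertices in total, so if $m^k>mk+1$ some such $v_T$ is still uncoloured. This holds for $m=3$ and all $k\geq 2$, since $3^k>3k+1$. On his next turn Bob colours that $v_T$ with colour $1$. Then $N[v_T]$ is entirely colour $1$, so colour $2$ is not dominating and Bob wins. As this works for every $p\geq 2$, no palette of size at least $2$ is winning for Alice, while palette size $1$ trivially is. Therefore $\domg(G,X)=1$.

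\textbf{Main obstacle.} The delicate point is the counting in Bob's strategy. We must check that Alice cannot pre-colour every "completed" target $v_T$ and cannot exhaust the available vertices in some $A_i$ before Bob finishes. Both reduce to the two inequalities $M>2mk+1$ and $m^k>mk+1$. It also does not matter whether Alice's moves on $S$ use colour $1$: any colour-$1$ vertex placed by Alice only increases the number of completed targets.
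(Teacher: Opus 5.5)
Your proof is correct, but it takes a genuinely different route from the paper's.

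The paper lets $G'$ be the ancestor--descendant graph of a complete binary tree of height $k-1$, so the $k$ levels are pairwise disjoint dominating sets. Bob's strategy there is adaptive. Starting at the root, he colours a root-to-leaf path with colour $1$, each time moving to a child whose subtree Alice has not touched; her single intervening move can meet at most one of the two disjoint subtrees. He ends at a leaf $v$ with $N_{G'}[v]$ equal to its ancestor chain. That walk only works if Bob moves first, since otherwise Alice just colours the root $2$. This is why the paper passes to $G=2G'$ and has Bob play on the component Alice has not entered.

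You instead adapt the Zelinka-type gadget from Theorem~\ref{theorem graphs with large minimum degree but small domatic game number}. Making $S$ a clique split into $k$ blocks and keeping only transversal targets is exactly what raises the domatic number from $2$ to $k$. Your Bob then uses a non-adaptive saturate-then-count strategy: three colour-$1$ vertices per block produce $3^k>3k+1$ completed targets, more than Alice can have pre-empted.

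The slack in that count absorbs Alice's possible first move, so a single connected graph handles both $X=A$ and $X=B$ with no doubling, and the verification is a pure counting check. The paper's construction is far more economical, with $2(2^k-1)$ vertices versus roughly $(6k+2)^k$ for yours. Its strategy also shows that an adaptive, branching argument can replace brute-force counting.
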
 
	
	\begin{proof}
		Fix $k\in\mathbb{N}$ and let $T$ be a binary rooted tree with root $r$ and height $k-1$. For a vertex $x\in V(T)$, let $\lceil x\rceil$ denote the set of all vertices $y\in V(T)$ such that $y$ is in the $x\text{-}r$-path in $T$, and let $\lfloor x\rfloor$ denote all the vertices $y\in V(T)$ such that $x\in \lceil y\rceil$. Recall that we say $x$ is a \textbf{child} of $y$ in $T$ if $xy\in E(T)$ and $x\in \lceil y\rceil$.
		
		Let $G'$ be the graph with $V(G')=V(T)$ and such that $xy$ is an edge of $G'$ if and only if either $x\in \lfloor y\rfloor$ or $y\in \lfloor x\rfloor$. Consider a $(G',B,2)$-game on $G'$. Bob will use the following strategy: 
		\begin{itemize}
			\item Bob will only use color 1.
			\item For $1\leq i\leq k$, on Bob's $i$th turn, Bob will color a vertex $v$ at distance (measured in $T$) $i-1$ from $r$ with $\lfloor v \rfloor$ completely uncolored and such that every vertex $v'\in \lceil v \rceil\setminus \{v\}$ is color 1.
			\item After the $k$th turn, Bob plays arbitrarily.
		\end{itemize}
		Note that Bob can always find a vertex $v$ as described above: on the first turn, Bob can play at the root $r$, and then proceeding by induction, if Bob played at $v_{i-1}$ on turn $i-1$, then on the $i$th turn, at least one of the two children (as measured in $T$) of $v_{i-1}$ satisfies the requirements. 
		
		On Bob's $k$th turn, he colors a vertex $v$ that is at distance $k-1$ (in $T$) from $r$ the color $1$, resulting in every vertex $\lceil v\rceil$ being color $1$. However, the vertices in $T$ at distance $k-1$ from $r$ are leaves, so by the definition of $G$, $\lceil v\rceil =N_{G'}[v]$, so at the end of the game $v$ does not see color $1$, and thus Bob wins. This gives us that $\domg(G',B)=1$.
		
		Now, we will prove the theorem with $G:=2G'$. In the $(G,X,2)$-game, regardless of if $X=A$ or $X=B$, Bob can implement a winning $(G',B,2)$-strategy on one of the two components of $G$ (if Alice goes first, Bob implements the strategy on the component that Alice did not play on, if Bob goes first, Bob can implement it on either component). Thus Bob wins the $(G,X,2)$-game, giving us $\domg(G,X)=1$.
		
		To see that $\dom(G)\geq k$, we note that in $G'$, for any fixed $0\leq \ell\leq k-1$, the vertice at distance (measured in $T$) exactly $\ell$ from the root $r$ constitute a dominating set, giving us $k$ pairwise-disjoint dominating sets.
	\end{proof}

	We conclude this section with a discussion on general upper bounds on the domatic game number. Recall that in~\cite{HR2025}, Hartnell and Rall proved Proposition~\ref{proposition upper bound in terms of min degree}, an upper bound on the domatic game number in terms of minimum degree. We give here an upper bound in terms of the domination number, rather than the degree of a graph.
	
	\begin{proposition}\label{proposition upper bound in terms of gamma}
		For any graph $G$ on $n$ vertices,
		\begin{equation}\label{equation upper bound in terms of domination}
			\domg(G,X)\leq\frac{n}{2\gamma(G)}+1
		\end{equation}
	\end{proposition}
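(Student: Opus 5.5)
Bob's strategy will be to spend all of his moves on one color, so that this color class ends up with fewer than $\gamma(G)$ vertices and hence cannot dominate $G$. Concretely, I will show that if the palette size $k$ satisfies $k>\frac{n}{2\gamma(G)}+1$, then Bob wins the $(G,X,k)$-game. Since at the end every vertex is colored and Alice needs all $k$ color classes to be dominating, each class must contain at least $\gamma(G)$ vertices. So it suffices for Bob to force some color class to have size less than $\gamma(G)$.

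Bob picks a color in advance, say $1$, and colors every vertex he selects with color $1$. Bob makes at least $\lfloor n/2\rfloor$ moves, so color $1$ receives at least $\lfloor n/2\rfloor \geq \frac{n-1}{2}$ vertices. The remaining vertices, at most $\lceil n/2\rceil\le\frac{n+1}{2}$ of them (those Alice colors), are shared among all $k$ colors. In particular, colors $2,\dots,k$ together receive at most $\frac{n+1}{2}$ vertices. For Alice to win, each of the $k-1$ colors $2,\dots,k$ must have at least $\gamma(G)$ vertices. This requires $(k-1)\gamma(G)\le \frac{n+1}{2}$, i.e.
\[
k\le \frac{n+1}{2\gamma(G)}+1.
\]

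This gives the bound up to the extra $+1$ in the numerator, which is where the parity bookkeeping becomes the main obstacle. I handle it as follows. If $n$ is even, Alice colors exactly $n/2$ vertices, which gives $(k-1)\gamma(G)\le n/2$ and hence exactly \eqref{equation upper bound in terms of domination}. If $n$ is odd, I would refine the count: color $1$ must also dominate, so it needs at least $\gamma(G)$ vertices. Alice's vertices must therefore cover $\gamma(G)$ vertices for each of the $k-1$ other colors, and these counts are integers. If $\frac{n+1}{2}\ge (k-1)\gamma(G)>\frac{n}{2}$ with $n$ odd, then $(k-1)\gamma(G)=\frac{n+1}{2}$. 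In that case Alice plays first and every one of her moves must go to colors other than $1$. Bob can then deviate once: on some turn he colors with color $2$ instead, or better, with a color $j$ that Alice has not yet completed. That color then effectively steals from color $1$, but color $1$ still has about $n/2-1$ vertices, far more than $\gamma(G)$ unless $\gamma(G)$ is huge. With that one deviation, the colors $2,\dots,k$ are still starved relative to their need.

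More simply, when $(k-1)\gamma(G)=\frac{n+1}{2}$ Alice's slack is zero. The alternative I would check first is whether $\gamma(G)\ge 1$ together with the strict inequality $k>\frac{n}{2\gamma}+1$ already gives $(k-1)\gamma(G)\ge \frac{n+1}{2}$ only in the tight case, and then argue that Bob, rather than using color $1$ exclusively, colors one vertex with color $2$ at the end. That move reduces Alice's freedom, since every vertex she has must then go to colors distinct from Bob's, and this contradicts the exact count. If this parity case proves delicate, the fallback is the clean statement: Bob's monochromatic strategy leaves Alice at most $\lceil n/2\rceil$ vertices to make $k-1$ dominating sets, which gives $\domg(G,X)\le \frac{\lceil n/2\rceil}{\gamma(G)}+1$, matching \eqref{equation upper bound in terms of domination} for even $n$.
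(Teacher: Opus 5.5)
Your even-$n$ case and the Bob-first case are fine. The argument breaks where you suspected, for $n$ odd with Alice moving first. There Alice makes $\frac{n+1}{2}$ moves, and with Bob's color fixed in advance nothing stops her from putting all of them on colors $2,\dots,k$, so your count only gives $(k-1)\gamma(G)\le\frac{n+1}{2}$. This is a failure of the strategy itself, not a bookkeeping issue. Take $K_3$ with palette $3$ and Alice first, where the proposition asserts $\domg(K_3,A)\le 2$. If Bob always plays color $1$, Alice plays $2$, Bob plays $1$, Alice plays $3$, and every color class dominates.

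Your proposed repair moves the wrong way. A Bob move with a color $j\neq 1$ adds a vertex to the pool available to the colors other than $1$, which only increases Alice's slack. Your fallback bound $\frac{\lceil n/2\rceil}{\gamma(G)}+1$ is strictly weaker than the proposition whenever $n$ is odd and $\gamma(G)\mid\frac{n+1}{2}$ (for instance whenever $\gamma(G)=1$).

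The missing idea is to choose Bob's color adaptively. If Alice moves first, Bob plays exclusively the color $c$ of Alice's first move; this is the paper's ``without loss of generality Alice's first color is $1$.'' Then color $c$ receives Alice's first vertex plus all of Bob's vertices. That is at least $\lceil n/2\rceil$ vertices in every case, and also when Bob moves first and uses a fixed color. So the other $k-1$ colors share at most $\lfloor n/2\rfloor\le n/2$ vertices. Since each dominating class needs at least $\gamma(G)$ vertices, Alice can only win if $(k-1)\gamma(G)\le n/2$. Hence Bob wins whenever $k>\frac{n}{2\gamma(G)}+1$, which is exactly the stated bound.
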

	
	\begin{proof}
		Let $k:=\left\lfloor \frac{n}{2\gamma(G)}\right\rfloor+1$. Consider an $(G,X,k+1)$-game. If Alice goes first, assume without loss of generality that the first color Alice plays is the color $1$. Consider a strategy for Bob where Bob uses only the color $1$, and otherwise plays arbitrarily. This leads to at least $n/2$ vertices being color $1$ at the end of the game. The remaining $n/2$ vertices can be partitioned into at most $\frac{n}{2\gamma(G)}$ dominating sets. Thus, only $k$ colors end up as dominating sets, so Bob wins the $(G,X,k+1)$-game, leading to the bound.
	\end{proof}
	
	It is worth noting that the upper bound in Proposition~\ref{proposition upper bound in terms of gamma} can be thought of in terms of degrees - a natural bound on the domination number of a graph $G$ on $n$ vertices is $\gamma(G)\geq \frac{n}{\Delta(G)+1}$ (this bound was perhaps first noted in writing in~\cite{WAS1979}). Applying this to~\eqref{equation upper bound in terms of domination} yields
	\[
	\domg(G,X)\leq \frac{\Delta(G)+1}{2}+1=\frac{\Delta(G)+3}{2},
	\]
	recovering the bound in Proposition~\ref{proposition upper bound in terms of min degree} if $G$ is regular (but slightly worse than Hartnell and Rall's extension in the case where $G$ is regular of odd degree and Alice goes first), and giving us a stronger bound whenever $G$ is regular and the bound $\gamma(G)\geq \frac{n}{\Delta(G)+1}$ is not tight.
	
	We note that the proofs of Propositions~\ref{proposition upper bound in terms of min degree} and~\ref{proposition upper bound in terms of gamma} did not actually require the palette size of the game to be fixed, and thus these bounds apply to the score game as well.
	
	\begin{corollary}\label{corollary absolute upper bounds on score}
		For any graph $G$ on $n$ vertices, and any $\ell\in\mathbb{N}$,
		\[
		\score(G,X,\ell)\leq \min\left\{\frac{\delta(G)+3}{2},\frac{n}{2\gamma(G)}+1\right\}
		\]
	\end{corollary}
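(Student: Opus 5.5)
The plan is to run the proofs of Proposition~\ref{proposition upper bound in terms of min degree} and Proposition~\ref{proposition upper bound in terms of gamma} again, this time as strategies for Bob in the score game. Each proof gives Bob a strategy that caps the number of dominating color classes no matter how large the palette is. If Bob can force at most $m$ color classes to be dominating in the $(G,X,\ell)$ score game, then $\score(G,X,\ell)\le m$. So it is enough to show that each Bob strategy bounds the number of dominating colors for an arbitrary palette size $\ell$.

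\textbf{The domination bound.} Take the score game with palette $[\ell]$. If Alice moves first, relabel colors so that her first color is $1$. Otherwise let $1$ be any color. Bob colors every vertex he picks with color $1$. Bob then colors at least $\lfloor n/2\rfloor$ vertices, all with color $1$. The remaining at most $\lceil n/2\rceil$ vertices, whose colors are not all $1$, make up every color class other than $1$.

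A dominating class has at least $\gamma(G)$ vertices, so the number of dominating classes other than $1$ is at most $\lceil n/2\rceil/\gamma(G)$. Counting color $1$ as well, at most $\frac{n}{2\gamma(G)}+1$ classes are dominating. The parity rounding needs care. I would follow the counting in Proposition~\ref{proposition upper bound in terms of gamma}: when Alice moves first and uses color $1$, her vertex colored $1$ also comes out of the non-$1$ pool, and this absorbs the extra half.

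\textbf{The degree bound.} Here I would adapt Hartnell and Rall's argument. Fix a vertex $v$ of degree $\delta(G)$; every dominating color must appear on $N[v]$, which has $\delta(G)+1$ vertices. Bob plays only on $N[v]$ while it has uncolored vertices, and always repeats a color already present there (or uses a single fixed color). This way Bob's moves add no new colors to $N[v]$, and Alice can place at most about half of the vertices of $N[v]$. So $N[v]$ sees at most roughly $\lceil(\delta+1)/2\rceil+1\le(\delta+3)/2$ distinct colors, and hence at most that many color classes dominate $v$.

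\textbf{The main obstacle} is the turn-order bookkeeping in this second argument. Alice may play outside $N[v]$, and Bob must respond inside it. It also matters who colors the first vertex of $N[v]$, since Bob cannot repeat a color if none is present yet. I would handle this by having Bob use one fixed color (say $1$) throughout. That gives at most one color from Bob plus at most $\lceil(\delta+1)/2\rceil$ colors from Alice, which is at most $\frac{\delta+3}{2}$. Combining the two bounds gives the minimum.
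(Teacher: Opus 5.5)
Your overall route matches the paper's, which simply observes that the Bob strategies behind Propositions~\ref{proposition upper bound in terms of min degree} and~\ref{proposition upper bound in terms of gamma} never use the palette size. Your domination half is fine: if Bob moves first he colors $\lceil n/2\rceil$ vertices, and if Alice moves first her relabelled first move supplies the missing vertex, so at most $\lfloor n/2\rfloor$ vertices carry a color other than $1$.

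The gap is in the degree half. Your final inequality $\lceil(\delta+1)/2\rceil+1\le(\delta+3)/2$ is false when $\delta$ is even, since the left side is then $(\delta+4)/2$. Because the score is an integer, you actually need at most $\delta/2+1$ dominating colors there. This is a defect of the strategy itself, not just of the bookkeeping. On $K_3$ with Alice first and $\ell=3$, Alice plays $2$, your Bob plays $1$, and Alice plays $3$. All three classes dominate, so Bob concedes score $3>5/2$. Likewise, on $K_{2t+1}$ with Alice first, a fixed-color Bob concedes $t+2$ against the claimed $t+1$.

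The repair is the relabelling you already used for the $\gamma$ bound, applied to the first color placed on $N[v]$ rather than on $G$. Inside $N[v]$, Bob always plays whichever color first appeared on $N[v]$. Then split on who colored $N[v]$ first.

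\emph{Alice colored $N[v]$ first.} Bob's moves add no new color, so $N[v]$ sees at most $\lceil(\delta+1)/2\rceil\le(\delta+2)/2$ colors.

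\emph{Bob colored $N[v]$ first.} This covers both Bob moving first and Alice opening outside $N[v]$. Each Alice move into $N[v]$ is immediately preceded by a Bob move into $N[v]$, since $N[v]$ was not yet full at that point. Hence Alice gets at most $\lfloor(\delta+1)/2\rfloor$ vertices of $N[v]$, and $N[v]$ sees at most $\lfloor(\delta+1)/2\rfloor+1\le(\delta+3)/2$ colors.

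Since every dominating class meets $N[v]$, this bounds the score for every $\ell$.
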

	
	\section{Relationships Between Different Domatic Games}\label{section relationship between games}
	
	In this section we explore bounds which feature two or more domatic games played on the same graph. The main exploration involves how adjusting the palette size of the game affects things.
	
	\subsection{Palette Sizes and the Score Game}
	
	In~\cite{HR2025}, motivating by an open question on the game chromatic number (see e.g.~\cite{H2025}) Hartnell and Rall asked the following question:
	
	\begin{question}[\cite{HR2025}]
		Does there exist a graph $H$ and a positive integer $k$ such that Bob wins the $(H,X,k)$-game, but Alice wins the $(H,X,k+1)$-game?
	\end{question}
	
	We answer this question in the negative.
	
	\begin{theorem}\label{theorem game monotonicity in pallete size}
		For any graph $G$ and $k,\ell\in\mathbb{N}$, if the $(G,X,k)$-game is Bob-win, then the $(G,X,k+\ell)$-game on $G$ is also Bob-win.
	\end{theorem}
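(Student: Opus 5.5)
It suffices to treat $\ell=1$ and then induct on $\ell$. So suppose Bob has a winning strategy $\sigma$ in the $(G,X,k)$-game. I will build a winning strategy for Bob in the $(G,X,k+1)$-game by strategy stealing with a merged color. Bob regards colors $k$ and $k+1$ of the larger game as a single color $k$ of the smaller game. Concretely, he maintains an imagined $(G,X,k)$-game whose position is the real position after every vertex colored $k+1$ is recolored $k$. Every real move (vertex $v$, color $c\in[k+1]$) is a legal move (vertex $v$, color $\min\{c,k\}$) in the imagined game, and the order of turns and the first player agree. Bob answers each of Alice's real moves with the move $\sigma$ prescribes in the imagined game, played with the same color (all colors of $\sigma$ lie in $[k]\subseteq[k+1]$). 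Since $\sigma$ is winning, at the end of the imagined game some color class fails to dominate $G$.

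There are two cases. If the failing color is some $c<k$, then the real class of color $c$ is identical to the imagined one, so it fails to dominate and Bob wins. If instead the failing color is $k$, then the real classes of colors $k$ and $k+1$ are subsets of the imagined class $k$. A subset of a non-dominating set is non-dominating, so both fail, and Bob again wins. This case is the only step that needs any care: merging two colors can only make domination easier for the merged class, which is exactly the direction we need.

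The main obstacle is just checking that the simulation is well defined. The imagined game ends exactly when the real one does, since the same vertices are colored, and Bob is never asked to play an illegal move. Both facts are immediate because vertex sets and turn order coincide. Induction on $\ell$ then gives the theorem. Equivalently, we get the monotonicity $\domg$-winnability is downward closed, so Alice wins the $(G,X,k)$-game exactly when $k\le\domg(G,X)$.
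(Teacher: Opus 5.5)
Your proof is correct and is essentially the paper's argument. The paper also has Bob run a ``colorblind'' auxiliary $(G,X,k)$-game in which Alice's extra colors are read as color $k$, copy a winning strategy there, and finish with the same two cases ($i<k$ versus $i=k$). The only difference is that the paper merges all of $[k+\ell]\setminus[k]$ into color $k$ at once, whereas you merge a single extra color and induct on $\ell$, which is equivalent.
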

	
	\begin{proof}
		Bob will use a ``colorblind'' strategy in the $(G,X,k+\ell)$-game: Formally we will play two games simultaneously, the main game, a $(G,X,k+\ell)$-game between Alice and Bob, and an auxiliary game, a $(G,X,k)$-game between players Alice2 and Bob2. Bob2 will play according to a winning strategy in the $(G,X,k)$-game, and Bob will copy every move Bob2 makes. Whenever Alice plays a color in $[k]$, Alice2 will copy Alice, and if Alice plays a color in $[k+\ell]\setminus [k]$, Alice2 will play the color $k$ at the same vertex that Alice just played at.
		
		Since Bob2 wins the $(G,X,k)$-game, at the end of the games, there is a color $i\in [k]$ such that $i$ does not induce a dominating set in the auxiliary game. If $i<k$, then this color also does not induce a dominating set in the main game, whereas if $i=k$, then actually none of the colors in $[k+\ell]\setminus [k]$ induce a dominating set in the main game. In either case, Bob wins the $(G,X,k+\ell)$-game.
	\end{proof}
	
	The crux of the above proof is essentially that expanding the palette size does not give Alice any extra power, it only is a detriment to her. However, when we consider the score variant of the domatic game, it is less clear if increasing the palette size benefits one player over another, at least when the palette size is greater than the domatic game number. When the palette size is small though, we have a strong understanding of the score.
	
	\begin{observation}\label{observation score and pallete size are the same when pallete size less than dom}
		Let $k\leq \domg(G,X)$. Then $\score(G,X,k)=k$.
	\end{observation}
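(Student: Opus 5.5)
The plan is to prove the two inequalities $\score(G,X,k)\le k$ and $\score(G,X,k)\ge k$ separately.

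The upper bound is immediate from the definition. With palette $[k]$, at most $k$ colors can induce dominating sets, so every play of the score game ends with score at most $k$. In particular the optimal-play value satisfies $\score(G,X,k)\le k$.

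For the lower bound, we first show that Alice wins the $(G,X,k)$-game, and then transfer her winning strategy to the score game.

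To see that Alice wins the $(G,X,k)$-game, set $m:=\domg(G,X)\ge k$. By the definition of the domatic game number, Alice wins the $(G,X,m)$-game. Suppose for contradiction that the $(G,X,k)$-game were Bob-win. Theorem~\ref{theorem game monotonicity in pallete size} with $\ell=m-k$ would then make the $(G,X,m)$-game Bob-win as well, which is a contradiction. (If $m=k$ there is nothing to prove.) Hence Alice has a winning strategy in the $(G,X,k)$-game.

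This step carries the only real content. Without monotonicity, the definition of $\domg(G,X)$ as the \emph{largest} winning palette size would not by itself guarantee that every smaller palette is Alice-win.

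For the transfer, note that the domatic score game with palette size $k$ has exactly the same moves as the $(G,X,k)$-game, and only the payoff differs. Alice plays her winning strategy from the $(G,X,k)$-game. Whatever Bob does, at the end all $k$ colors induce dominating sets, so the score is exactly $k$. Therefore Alice can guarantee score at least $k$, i.e.\ $\score(G,X,k)\ge k$.

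Combining the two inequalities gives $\score(G,X,k)=k$.
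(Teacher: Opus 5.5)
Your proof is correct and follows the paper's argument: monotonicity in palette size (Theorem~\ref{theorem game monotonicity in pallete size}) gives Alice a winning strategy in the $(G,X,k)$-game, and she reuses that strategy in the score game to guarantee $k$ points. You also spell out the trivial upper bound $\score(G,X,k)\le k$ and the edge case $k=\domg(G,X)$, both of which the paper leaves implicit.
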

	
	\begin{proof}
		Theorem~\ref{theorem game monotonicity in pallete size} implies that Alice has a winning strategy in the $(G,X,k)$-game, so Alice can use this strategy to score $k$ points in the $(G,X,k)$-score-game.
	\end{proof}
	
	When $k>\domg(G,X)$, the relationship between the palette size and the score is less clear. In particular, increasing the palette size theoretically could give a boost to either player - Bob has more freedom to choose colors which are less synergistic with previously chosen colors, and Alice has more colors with which to potentially create dominating sets. We can prove that the score cannot decrease by much with a palette size larger than $\domg(G,X)$.
	
	\begin{proposition}\label{proposition score does not decrease much}
		Let $\ell>\domg(G,X)$. Then $\score(G,X,\ell)\geq \domg(G,X)-1$.
	\end{proposition}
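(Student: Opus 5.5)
The plan is to imitate the ``colorblind'' strategy from the proof of Theorem~\ref{theorem game monotonicity in pallete size}, but with the roles reversed: this time Alice merges colors. Write $d:=\domg(G,X)$. If $d=1$ there is nothing to prove, since the score is always non-negative, so assume $d\geq 2$. By definition of $\domg(G,X)$, Alice has a winning strategy in the $(G,X,d)$-game.

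Alice plays two games at once.
\begin{itemize}
\item The \emph{main game} is the $(G,X,\ell)$-score-game against Bob.
\item The \emph{auxiliary game} is a $(G,X,d)$-game between Alice2 and Bob2. Alice2 follows a winning strategy, and the same player moves first as in the main game.
\end{itemize}
The two games are kept synchronized as follows.
\begin{itemize}
\item Whenever Bob colors a vertex $v$ with a color $c\in[d-1]$ in the main game, Bob2 colors $v$ with $c$ in the auxiliary game.
\item Whenever Bob colors $v$ with a color $c\in[\ell]\setminus[d-1]$, Bob2 colors $v$ with color $d$. This set of colors is nonempty since $\ell>d$, and color $d$ is available because $d\leq \ell$.
\item Whenever Alice2's strategy colors a vertex $u$ with some $c\in[d]$, Alice colors $u$ with $c$ in the main game. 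This is a legal move since $d<\ell$.
\end{itemize}
Both games always have the same set of colored vertices, so every response is legal in both games. Both games end at the same time.

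At the end of the auxiliary game every color in $[d]$ induces a dominating set, because Alice2 played a winning strategy. For each $c\in[d-1]$, the set of vertices colored $c$ in the main game is exactly the set of vertices colored $c$ in the auxiliary game. Indeed, a vertex gets auxiliary color $c<d$ precisely when it was given main color $c$, either by Alice copying or by Bob with $c\in[d-1]$. Hence colors $1,\dots,d-1$ each induce a dominating set in the main game, so $\score(G,X,\ell)\geq d-1$. Only color $d$ is lost, since in the main game it has been split among the colors $d,\dots,\ell$.

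There is essentially no hard step here. The only point needing care is checking that the correspondence between the games is well defined: every main-game move must map to a legal auxiliary move and vice versa, and the identification of the color classes $1,\dots,d-1$ must be exact in both directions.
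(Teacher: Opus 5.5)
Your proof is correct, and it takes a genuinely different (and shorter) route than the paper's. You let Alice simulate a winning $(G,X,d)$-strategy, collapsing Bob's colors $d,\dots,\ell$ onto $d$. The color classes $1,\dots,d-1$ then coincide exactly in the two games, so only color $d$ can be lost.

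The paper instead works on Bob's side. It introduces $r_s$, the value of the $(G,X,s)$-score game when Bob is restricted to playing only color $1$, and proves three facts:
\begin{itemize}
\item[(i)] $r_t\le r_s$ for $t\le s$, since against a one-color Bob extra palette colors cannot hurt Alice;
\item[(ii)] $\score(G,X,t)\le r_t$;
\item[(iii)] $r_t\le \score(G,X,t)+1$, since Bob can copy the locations of an optimal Bob2 while always using color $1$, and this can add at most color $1$ to the dominating colors.
\end{itemize}
It then chains $\score(G,X,\ell)\ge r_\ell-1\ge r_d-1\ge \score(G,X,d)-1=d-1$.

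Your single simulation avoids the auxiliary parameter and the three-step chain. It is the natural Alice-side mirror of the colorblind argument the paper uses for monotonicity in the palette size.

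Both routes actually yield the same stronger inequality $\score(G,X,\ell)\ge \score(G,X,t)-1$ for every $t\le \ell$. In your version, let Alice2 play an optimal $(G,X,t)$-score strategy instead of a winning one. What the paper's route buys in addition is a structural fact about Bob's play: committing to a single color costs him at most one point, i.e.\ $\score(G,X,t)\le r_t\le \score(G,X,t)+1$.
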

	
	\begin{proof}
		To prove this, we first introduce an intermediary parameter. We will call a Bob strategy in which Bob only ever plays the color $1$, a \textbf{$1$-Bob strategy}. For an integer $s$, let $r_s$ denote the score of the $(G,X,s)$-score-game where Bob is restricted to playing a $1$-Bob strategy, but otherwise Alice and Bob play optimally (and for the sake of well-definedness, we will assume Alice is aware of Bob's restriction when Alice chooses their strategy).
		
		We claim that $r_t\leq r_s$ for any $t\leq s$. Indeed, in the $(G,X,s)$-score game, Alice can play according to their optimal strategy from the $(G,X,t)$-score game (Bob never plays a move outside of the palette $[t]$ since Bob is playing a $1$-Bob strategy, so this strategy is well-defined), and score at least $r_t$ points in the $(G,X,s)$-score game. We further note that for any $t\in\mathbb{N}$ $\score(G,X,t)\leq r_t$ since Bob has a $1$-Bob strategy (and thus a strategy) which restricts the score to at most $r_t$ in the $(G,X,t)$-score game. 
		
		We now claim that $r_t\leq \score(G,X,t)+1$ for any $t\in \mathbb{N}$. Consider the following $1$-Bob strategy: We will play two $(G,X,t)$-score games simultaneously, the first (the main game) with Alice and Bob, where Alice is playing optimally and Bob plays according to the $1$-Bob strategy we are currently describing, and the second game with new players Alice2 and Bob2, where Bob2 is not restricted and plays optimally. On each of Alice's turns, Alice and Alice2 play exactly the same move (Alice2 copies Alice's move). On Bob's turns, Bob waits for Bob2 to play, and then copies the location Bob2 played, but plays the color $1$ regardless of which color Bob2 choose. After both games finish, their boards are only different in that all the places Bob2 played are colored $1$ in the main game. In the second game, the board ended with at most $\score(G,X,t)$ dominating colors. In the main game, each of these colors may still be dominating, and the color $1$ might additionally be dominating, but no other colors can be dominating, giving a $1$-Bob strategy that results in the final score of at most $\score(G,X,t)+1$, and thus $r_t\leq \score(G,X,t)+1$.
		
		Now, set $k:=\domg(G,X)$ and note that we have from Observation~\ref{observation score and pallete size are the same when pallete size less than dom} that $\domg(G,X)=\score(G,X,k)$. Putting everything together, we have
		\[
		\score(G,X,\ell)\geq r_\ell-1\geq r_k-1\geq\score(G,X,k)-1=\domg(G,X)-1.
		\]
	\end{proof}
	
	It is worth noting that the bound in Proposition~\ref{proposition score does not decrease much} is sharp infinitely often.
	
	\begin{proposition}\label{proposition example of score drop}
		For every $k\in\mathbb{N}$, there are infinitely many examples of graphs $G$ such that $\domg(G,X)=k$, while $\score(G,X,k+1)=k-1$.
	\end{proposition}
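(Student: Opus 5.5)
We take disjoint unions of cliques, where a color dominates exactly when it appears in every component. Fix $k\ge 2$ and $m\ge 2$. Let $G_m$ be the disjoint union of $m$ copies of $K_{2k-2}$ when Bob moves first. When Alice moves first, add one extra copy of $K_{2k-1}$ to absorb her opening move. In both cases $\delta(G_m)=2k-3$, so Proposition~\ref{proposition upper bound in terms of min degree} gives $\domg(G_m,X)\le k$. Letting $m$ vary gives infinitely many graphs. The plan has three steps: show $\domg(G_m,X)\ge k$, show $\score(G_m,X,k+1)\le k-1$, and invoke Proposition~\ref{proposition score does not decrease much} for the reverse inequality.

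\emph{Lower bound $\domg(G_m,X)\ge k$.} Alice uses a pairing strategy. Whenever Bob plays in some component, Alice answers in the same component with a color not yet present there; if Bob fills a component, Alice plays anywhere. When Alice moves first, she opens in the $K_{2k-1}$, and afterwards that component also has an even number of vertices left, so pairing continues. In a $K_{2k-2}$ that Bob enters first, Bob makes $k-1$ moves and Alice makes $k-1$ answers. Bob's first color together with Alice's $k-1$ new colors gives all $k$ colors. A missing color always exists when Alice answers, because at that moment fewer than $k$ colors are present: she has made $j-1$ moves there, each adding a new color, plus at most one color from Bob. In the $K_{2k-1}$, Alice makes $k$ moves, each with a new color, so that component also receives all $k$ colors. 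Hence every color appears in every component, and $\domg(G_m,X)=k$.

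\emph{Upper bound on the score with palette $k+1$.} Bob keeps the pairing himself. Whenever Alice plays in a component, Bob answers in the same component with a color already present there. Bob only enters a fresh component when he is forced to, namely on his first move when he moves first, or when Alice's move completed a component. Since $m\ge 2$ and Bob enters at most one fresh $K_{2k-2}$ at the very start, some copy $C$ of $K_{2k-2}$ is entered first by Alice. In $C$, Alice makes exactly $k-1$ moves and Bob's moves add no new colors. Thus $C$ contains at most $k-1$ colors, so at most $k-1$ colors dominate $G_m$. Therefore $\score(G_m,X,k+1)\le k-1$, and Proposition~\ref{proposition score does not decrease much} gives equality.

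\emph{Main obstacle.} The delicate step is the turn-by-turn parity bookkeeping, especially in the Alice-first version. I need to confirm that Bob can always answer inside Alice's component, i.e. that Alice never moves into a component with an odd number of uncolored vertices where no answer remains. Because each component has an even number of uncolored vertices after Alice's opening in $K_{2k-1}$, and every Alice move in a component is followed by Bob's move in that same component, this should go through. I would still verify carefully that Bob's forced moves into fresh components happen at most once, at the start, so that a component like $C$ entered first by Alice really exists.
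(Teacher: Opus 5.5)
\textbf{The score bound has a genuine gap.} Your conclusion rests on the claim that some copy $C$ of $K_{2k-2}$ is entered first by Alice, and this claim is false. Bob is forced to open a fresh component not only at the start but every time Alice colors the last vertex of a component, and Alice can make that happen every time. Under your pairing, at each of Alice's turns exactly one component has an odd number of uncolored vertices: the $K_{2k-1}$, or the component Bob opened most recently. If Alice always plays there, she colors its last vertex herself, Bob must open the next fresh $K_{2k-2}$, and so on. So every $K_{2k-2}$ is opened by Bob, and your $C$ never exists. Since you never specify Bob's opening color, Alice can then make all components show the same $k$ colors. For instance, take $k=2$, Bob first and $G=mK_2$. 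If Bob always opens with color $1$ and Alice always completes that $K_2$ with color $2$, the score is $2=k$. The same happens in your Alice-first graph once Alice has finished the $K_{2k-1}$.

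\textbf{The missing idea.} It is the one behind Phase 2 (Alice first) and Phase 3 (Bob first) of the paper's proof: Bob's opening color must break the common color set. Apart from Bob's very first move, every forced opening occurs right after Alice has completed some component $D$, and under your pairing $D$ carries at most $k$ of the $k+1$ colors. Bob should therefore open the new component with a color absent from $D$. That component ends with at most $k$ colors (his opener plus Alice's $k-1$ moves there, since his other moves repeat colors), and one of them is missing from $D$. Hence at most $k-1$ colors appear in every component. If Bob never makes such an opening, then since $m\ge 2$ some $K_{2k-2}$ really is entered by Alice, and your count applies.

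\textbf{Three smaller points.}
\begin{enumerate}
\item Your reason that a missing color exists when Alice answers is wrong as stated, since Bob may already have introduced several colors. The conclusion survives, because a component already showing all $k$ colors is done.
\item The case $k=1$ is not covered, since $K_{2k-2}=K_0$.
\item Your graphs depend on who moves first; for example $\domg(mK_2,A)=1$. The paper's single family $\frac{n}{2k-1}K_{2k-1}$ is shown to work for both orders at once.
\end{enumerate}
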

	
	\begin{proof}
		Fix $k\in\mathbb{N}$, and let $n\geq 3(2k-1)$ be any integer such that $(2k-1)\mid n$. Let $G:=\frac{n}{2k-1}K_{2k-1}$ denote the disjoint union of $n/(2k-1)$ copies of the clique $K_{2k-1}$. Let $K^{(1)},\dots,K^{(n/(2k-1))}$ denote the cliques in $G$. 
		
		First let us show that $\domg(G,X)=k$. When playing the $(G,X,k)$-game, Alice uses the following strategy:
		\begin{itemize}
			\item If Bob just played in a clique $K^{(i)}$, and there is an uncolored vertex in $K^{(i)}$, Alice plays in $K^{(i)}$. Otherwise, Alice chooses a clique with an uncolored vertex arbitrarily and plays there.
			\item Alice always chooses to play a color that is not currently featured in the clique she is playing on, unless the clique she is playing on already sees each color, in which case Alice plays a color arbitrarily.
		\end{itemize}
		With this strategy, for each clique $K^{(i)}$ one of two things happens: Either Alice plays on $K^{(i)}$ first, meaning Alice plays at least $\lceil (2k-1)/2\rceil=k$ times on $K^{(i)}$, each a different color, or Bob plays on $K^{(i)}$ first, meaning Alice plays on $K^{(i)}$ at least $\lfloor (2k-1)/2\rfloor=k-1$ times, playing a different color each time, and all those colors are different than the color Bob played first, giving this clique all $k$ colors. Thus, Alice wins the $(G,X,k)$-game.
		
		In the $(G,A,k+1)$-game, Bob uses the following strategy: Assume without loss of generality that Alice's first move plays the color $1$ in the clique $K^{(1)}$. Then in the next $k-1$ rounds, Bob plays the color $1$ in $K^{(1)}$, and after this Bob plays arbitrarily. $K^{(1)}$ ends up with at least $k$ vertices colored $1$, so $K^{(1)}$ sees at most $k$ colors in total, and so Bob wins.
		
		In the $(G,B,k+1)$-game, Bob plays arbitrarily in $K^{(1)}$, until at some point Alice plays a move in a clique that is not $K^{(1)}$. Note that since $|V(K^{(1)})|$ is odd, this is guaranteed to occur. Assume without loss of generality that Alice played the color $1$ in $K^{(2)}$. Then in the next $k-1$ rounds, Bob plays the color $1$ in $K^{(2)}$, and after this Bob plays arbitrarily. Similarly to in the $(G,A,k+1)$-game, this causes Bob to win.
		
		Thus, $\domg(G,X)=k$.
		
		Now, let us show that $\score(G,X,k+1)=k-1$. The lower bound follows from Proposition~\ref{proposition score does not decrease much}, so we need only give a Bob strategy that matches this. 
		
		First consider the $(G,A,k+1)$-score game, assume without loss of generality that Alice's first move is to play $1$ on the clique $K^{(1)}$. The strategy will have two ``phases''. In Phase 1, Bob plays $1$ on $K^{(1)}$ until all vertices in $K^{(1)}$ are colored. Note that until $K^{(1)}$ is completely colored, if Alice plays on any other clique, $K^{(1)}$ ends up with $\lceil 2k-1\rceil+1=k+1$ $1$'s, so $K^{(1)}$ received at most $k-1$ colors in total and Bob gets the desired score. Thus, we may assume that at the end of Phase 1, every move thus far has been played on $K^{(1)}$, and that $K^{(1)}$ sees exactly $k$ colors and in particular this implies that it is Bob's turn. Let us assume without loss of generality that  $K^{(1)}$ does NOT see the color $k+1$. Now, in Phase 2, Bob repeatedly plays the color $k+1$ on $K^{(2)}$ until every vertex on $K^{(2)}$ is colored. This results in at least $\lceil 2k-1\rceil$ vertices in $K^{(2)}$ being colored $k+1$, which implies that $K^{(1)}$ sees at most $k$ colors. Thus, $K^{(1)}$ and $K^{(2)}$ each see at most $k$ colors, but $K^{(1)}$ does not see $k+1$, so the cliques do not see the same $k$ colors. Thus, regardless of how play continues, the maximum score of the game is $k-1$, as desired.
		
		Now, in the $(G,B,k+1)$-game, we proceed similarly, but we need three ``phases''. In Phase 1, Bob plays $1$ continuously on $K^{(1)}$ until it is completely colored. Note that in this phase, if Alice plays two or more moves outside of $K^{(1)}$, then Bob ends up playing $\lceil 2k-1\rceil+1=k+1$ $1$'s on $K^{(1)}$, so $K^{(1)}$ receives at most $k-1$ colors, so we may assume Alice played at most one move outside $K^{(1)}$, in fact since $K^{(1)}$ has an odd number of vertices, Alice had to play a single move outside of $K^{(1)}$, say without loss of generality that Alice played color $r$ on $K^{(2)}$. In Phase 2, Bob plays the color $r$ on $K^{(2)}$. Similar to Phase 1 in the $(G,A,k+1)$-score game strategy, this forces Alice to only play on $K^{(2)}$, leaving us in a game state where $K^{(2)}$ receives exactly $k$ colors, and Bob is next to play. We may assume without loss of generality that the color $k+1$ does not appear on $K^{(2)}$, and similar as to Phase 2 in the $(G,A,k+1)$-score game, Phase 3 will consist of Bob repeatedly playing the color $k+1$ on $K^{(3)}$. This results in $K^{(3)}$ receiving exactly $k$ colors, but the colors on $K^{(2)}$ and $K^{(3)}$ do not match, so the score of the game is at most $k-1$.
		
		Thus, $\score(G,X,k+1)=k-1$. As such a graph $G$ on $n$ vertices exists for any $n\geq 3(k-1)$ with $(k-1)\mid n$, we indeed have an infinite family of examples.
	\end{proof}
	
	Propositions~\ref{proposition score does not decrease much} and~\ref{proposition example of score drop} show that the score can drop from increasing the palette size (but not by much). A natural question is if the score can increase, which we are not sure about (see Question~\ref{question score increase} in the concluding remarks). For a fixed graph $G$, the score cannot continuously increase as $\ell\to\infty$ since Corollary~\ref{corollary absolute upper bounds on score} gives absolute bounds on the score in terms of $G$. However, in the range where the score may increase, we can show that if the score does increase as $\ell$ increases, it does so a constant factor more slowly than $\ell$.
	
	\begin{proposition}\label{proposition upper bound on score constant factor more slowly than pallete size}
		Let $G$ be a graph with $k:=\domg(G,X)$. If $\ell\geq k+1$, then
		\[
		\score(G,X,\ell)<\frac{k}{k+1}\ell+1
		\]
	\end{proposition}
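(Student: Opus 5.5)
The plan is to generalize the ``colorblind'' simulation from the proof of Theorem~\ref{theorem game monotonicity in pallete size}. Instead of collapsing the extra colors onto a single color, we collapse all $\ell$ colors onto $k+1$ colors, in blocks that are as equal as possible. Since $k=\domg(G,X)$ is the largest palette size for which Alice wins, the $(G,X,k+1)$-game is Bob-win. Bob then only has to make one entire block of colors fail, and that block contains at least $\lfloor \ell/(k+1)\rfloor$ colors.

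\textbf{Setup.} Fix a surjection $\varphi:[\ell]\to[k+1]$ whose fibers $\varphi^{-1}(j)$ each have size at least $\lfloor \ell/(k+1)\rfloor$. This is possible since $\ell\geq k+1$: we split $[\ell]$ into $k+1$ nearly equal consecutive blocks. We then run two games simultaneously, mirroring the proof of Proposition~\ref{proposition score does not decrease much}.
\begin{itemize}
\item The main game is the $(G,X,\ell)$-score game between Alice and Bob.
\item The auxiliary game is a $(G,X,k+1)$-game between Alice2 and Bob2, where Bob2 follows a winning strategy.
\item When Alice colors $v$ with $c$, Alice2 colors $v$ with $\varphi(c)$.
\item When Bob2 colors $v$ with $j$, Bob colors $v$ with an arbitrary color from $\varphi^{-1}(j)$, say its smallest element.
\end{itemize}
The two games stay synchronized: the same vertices are colored in the same order. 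At every moment the auxiliary color of each vertex is $\varphi$ of its main color.

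\textbf{Counting failing colors.} At the end, Bob2 has won, so some $j\in[k+1]$ is not dominating in the auxiliary game. The set of vertices with auxiliary color $j$ is exactly the union of the main color classes $c\in\varphi^{-1}(j)$. Each such class is a subset of a non-dominating set, so it is itself non-dominating. Hence at least $|\varphi^{-1}(j)|\geq \lfloor \ell/(k+1)\rfloor$ colors fail in the main game, and
\[
\score(G,X,\ell)\leq \ell-\left\lfloor \frac{\ell}{k+1}\right\rfloor<\ell-\frac{\ell}{k+1}+1=\frac{k}{k+1}\ell+1 .
\]

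\textbf{Main point to check.} The only step needing care is that the simulation is legal and that Bob does not need to know $j$ in advance. The argument works for whichever fiber ends up failing, because every fiber has the required size. The remaining points are routine: the Bob-win status of the $(k+1)$-game comes straight from the definition of $\domg$, and the choice of fiber sizes is elementary.
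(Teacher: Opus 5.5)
Your proposal is correct and is essentially the paper's proof. The paper also has Bob copy a winning Bob2 from an auxiliary $(G,X,k+1)$-game, with Alice's colors projected via $\mathrm{mod}^*_{k+1}$ (residue classes rather than your consecutive blocks). The failing auxiliary color then gives at least $s=\lfloor \ell/(k+1)\rfloor$ non-dominating colors in the main game, which yields the same final bound.
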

	
	For the proof of Proposition~\ref{proposition upper bound on score constant factor more slowly than pallete size}, we need the following definition. For $s\in\mathbb{N}$, let $\mathrm{mod}^*_s:\mathbb{Z}\to [s]$ be the function defined by
	\[
	\mathrm{mod}^*_s(x)=\begin{cases}
		s&\text{ if }s\mid x,\\
		(x\mod s)&\text{ if }s\nmid x.
	\end{cases}
	\]
	I.e. $\mathrm{mod}^*_s$ returns the remainder of $k$ when divided by $s$, except when $k$ is a multiple of $s$, then it returns $s$.
	
	\begin{proof}
		Let $s$ be the largest integer such that $s(k+1)\leq \ell$. We claim that $\score(G,X,\ell)\leq \ell-s$. Indeed, consider the following Bob strategy: We will play two games simultaneously, the main game, a $(G,X,\ell)$-score game between Bob and Alice, and an auxiliary game, a $(G,X,k+1)$-game between players Bob2 and Alice2. Whenever Alice plays color $c$ at vertex $v$, Alice2 will play $\mathrm{mod}^*_{k+1}(c)$ at vertex $v$. Bob2 will play a winning strategy in the $(G,X,k+1)$-game, and Bob will copy Bob2.
		
		With Bob and Bob2 following the above strategies, at the end of the auxiliary game, one of the $k+1$ colors does not end up dominating. In the main game, this corresponds to at least $s$ colors not dominating, completing the proof of our claim.
		
		Now, note that by the definition of $s$, we have that $\ell<(s+1)(k+1)$ or equivalently $s>\frac{\ell}{k+1}-1$. Thus,
		\[
		\score(G,X,\ell)\leq \ell-s<\ell-\left(\frac{\ell}{k+1}-1\right)=\frac{k}{k+1}\ell+1.
		\]
	\end{proof}
	
	\subsection{Order of Play}
	
	In this section we will explore how different two games can be by switching who goes first.
	
	\begin{theorem}\label{theorem gaps between Alice first and Bob first}
		For any graph $G$, we have that
		\[
		\frac{\domg(G,A)-1}{2}\leq \domg(G,B)\leq 2\cdot \domg(G,A)+1,
		\]
		and
		\[
		\frac{\domg(G,B)-1}{2}\leq \domg(G,A)\leq 2\cdot \domg(G,B)+1,
		\]
	\end{theorem}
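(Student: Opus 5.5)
Note first that the two displayed chains contain the same two inequalities: $\frac{\domg(G,A)-1}{2}\le \domg(G,B)$ is equivalent to $\domg(G,A)\le 2\domg(G,B)+1$, and similarly with $A$ and $B$ swapped. So it suffices to prove, for $\{X,Y\}=\{A,B\}$, that $\domg(G,Y)\geq \lfloor \domg(G,X)/2\rfloor\ge\frac{\domg(G,X)-1}{2}$. I would give one strategy-stealing/simulation argument that works for both directions. Let $k:=\domg(G,X)$ and $m:=\lfloor k/2\rfloor$. If $m=0$ there is nothing to prove, so assume $m\ge 1$. Fix a winning Alice strategy $\sigma$ for the $(G,X,k)$-game. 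The goal is a winning Alice strategy for the $(G,Y,m)$-game.

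\textbf{Color merging.} Define $\pi:[k]\to[m]$ by $\pi(2i-1)=\pi(2i)=i$ for $i\le m$, and $\pi(k)=m$ if $k$ is odd. In the other direction, send a real color $j\in[m]$ to the imaginary color $2j-1$. Each real color class will then be a union of at least two imaginary color classes. This redundancy is the whole point: if one imaginary class is corrupted at a single vertex, its partner class still dominates, so the merged class dominates.

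\textbf{Simulation with one floating vertex.} Alice plays an imaginary $(G,X,k)$-game using $\sigma$ alongside the real $(G,Y,m)$-game. Real Bob moves are copied into the imaginary game with the translated color, and Alice's imaginary moves are played in the real game with color $\pi(\cdot)$. The mismatch in who moves first is absorbed by one ``floating'' vertex, which is colored in one game but treated as uncolored in the other.

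If $X=A$ and $Y=B$, Bob's first real move at $v$ is declared invisible in the imaginary game, and $v$ becomes the floating vertex. If $X=B$ and $Y=A$, Alice's first real move is arbitrary at some $u$; $u$ floats, and the imaginary game then begins with Bob's first move.

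Play then proceeds as follows. Whenever $\sigma$ tells Alice to play at the floating vertex, she records that imaginary move, which makes the vertex agree in both games except possibly in color. In the real game she instead plays an arbitrary uncolored vertex $u'$ with an arbitrary color, and $u'$ becomes the new floating vertex. Bob can never play the floating vertex in the imaginary game, because it is already colored in the real game. So at every moment the two boards agree, up to $\pi$, everywhere except at a single vertex. At the end, if the imaginary game still has the floating vertex uncolored, Alice completes it arbitrarily; the imaginary game is then a legal complete play consistent with $\sigma$, so all $k$ imaginary colors dominate.

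\textbf{Transfer and the main obstacle.} The step needing the most care is this transfer back to the real game. The final real coloring equals $\pi$ applied to the final imaginary coloring, except at one vertex $w$. Moving $w$ out of an imaginary class $c$ can destroy domination only for $c$ itself. But $\pi(c)$ also contains another imaginary class $c'\neq c$ whose vertices are colored $\pi(c)$ in the real game, and $c'$ still dominates. Adding $w$ to a class never hurts domination. Hence every real color in $[m]$ dominates, and Alice wins the $(G,Y,m)$-game.

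The remaining work is bookkeeping rather than a new idea. One must check that each move Alice makes in either game is legal (there is always an uncolored real vertex when she needs one) and that the invariant ``at most one disagreeing vertex'' survives every case, including the final move.
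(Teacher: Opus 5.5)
\textbf{There is a genuine gap.} Your reduction to $\domg(G,Y)\ge\lfloor \domg(G,X)/2\rfloor$ is correct, but the invariant ``the boards disagree at only one vertex'' is false, and the transfer step depends on it.

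Suppose $\sigma$ tells Alice to play at the floating vertex $w$ with imaginary color $c$. The real color of $w$ was fixed earlier: by Bob, for the first floating vertex when $Y=B$, or by your ``arbitrary color'' afterwards. Nothing forces it to equal $\pi(c)$. You acknowledge this (``agree except possibly in color''), but that is exactly a disagreement that matters. After this step $w$ is a permanent mismatch, and the new floating vertex $u'$ is a second one. Every later time $\sigma$ chooses the current floating vertex adds another permanent mismatch. Your strategy does nothing to bound how often that happens, since $\sigma$ is an arbitrary winning strategy that reacts to Bob's moves.

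Pairing gives only redundancy $2$, so two mismatches already suffice to break the argument. Take $w_1,w_2\in N[x]$ with imaginary colors $2i-1$ and $2i$, both with real colors $\neq i$. If they are the only vertices of those imaginary colors in $N[x]$, real color $i$ fails at $x$. No fixed amount of color redundancy can absorb an unbounded set of mismatches. So this is not bookkeeping: your simulation has no mechanism to keep the discrepancy set small.

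The paper avoids this by arguing from Bob's side, where discrepancies can be made one-directional. With $k=\domg(G,X)$, Bob plays the $(G,Y,2k+2)$-game against an auxiliary $(G,X,2k+2)$-score game. He colors every displaced ``special'' vertex with the single color $1$. Then for each color $c\neq 1$, the real class of $c$ is a subset of the auxiliary class of $c$, however many special vertices accumulate. Bob only needs the auxiliary game to end with at least two non-dominating colors, so that one of them is not $1$. Proposition~\ref{proposition upper bound on score constant factor more slowly than pallete size} supplies this:
$\score(G,X,2k+2)<\frac{k}{k+1}(2k+2)+1=2k+1$.

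Alice has no analogous ``junk'' color, since she needs every class to dominate. To rescue your route you would need a way to keep the number of color-mismatched vertices at one, which your strategy lacks. Otherwise, switch to Bob's perspective together with the score-game bound.
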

	
	\begin{proof}
		First, we show $\domg(G,B)\leq 2\cdot \domg(G,A)+1$. Let $k:=\domg(G,A)$. We claim Bob wins the $(G,B,2k+2)$-game. Consider the following Bob strategy: We will play two games simultaneously, the main game, a $(G,B,2k+2)$-game with Alice and Bob, along with the auxiliary game, an $(G,A,2k+2)$-score game with players Alice2 and Bob2, where Bob2 is playing optimally.
		
		In the main game, on Bob's first turn, they choose an arbitrary vertex $v$ to color $1$, we will call $v$ the \textbf{special vertex}. Now, on Alice's turn, Alice and Alice2 make the same moves in the main game and the auxiliary game (i.e. Alice2 copies Alice). Then, on Bob's turn, first Bob2 plays (optimally) in the auxiliary game, and Bob does the following.
		\begin{itemize}
			\item If Bob2 played at the special vertex $v$, then Bob chooses some new uncolored vertex $u$ (if such a vertex exists), and colors it $1$, and labels it as the new special vertex ($v$ is no longer labeled as special).
			\item Otherwise, Bob copies Bob2.
		\end{itemize}
		In each round, there is always exactly $1$ special vertex, and the only differences in the two game boards is that all the vertices that are special or ever been special are colored $1$. This continues until the game ends. In the auxiliary game, by Proposition~\ref{proposition upper bound on score constant factor more slowly than pallete size}, we have that the final score is strictly less than $\frac{k}{k+1}(2k+2)+1=2k+1$, i.e. the score is at most $2k$, so there are at least two colors in the auxiliary game which are not dominating, and one of those colors is not the color $1$, so in the main game, this color is still not dominating, and thus Bob won. This gives that $\domg(G,B)\leq 2\cdot \domg(G,A)+1$.
		
		We will proceed similarly to show that $\domg(G,A)\leq 2\cdot \domg(G,B)+1$. Let $k':=\domg(G,B)$, and consider the following Bob strategy in the $(G,A,2k+2)$-game: We again play two games, the main game, a $(G,A,2k+2)$-game between Alice and Bob, along with an auxiliary game, a $(G,B,2k+2)$-score game between Alice2 and Bob2, where Bob2 will be playing optimally. To initialize, Alice makes a move in the main game, say Alice colors the vertex $v$ the color $1$, we label $v$ as the special vertex and then we begin playing both games simultaneously from here on out. On Bob's turn, first Bob2 makes an optimal move (Bob2 did not see Alice's initial move). Then Bob makes a move according to the following.
		\begin{itemize}
			\item If Bob2 played at the currently special vertex $v$, then Bob arbitrarily chooses some uncolored $u$ (if such a vertex exists), and plays the color $1$ at $u$. $u$ becomes the new special vertex.
			\item Otherwise, Bob copies Bob2.
		\end{itemize}
		Then on Alice's turn, Alice and Alice2 play identically (Alice2 copies Alice).
		
		At any stage in the game, the only differences between the game boards are that the special vertices in the main game are all colored $1$, whereas in the auxiliary game they may be colored differently. Once the game ends, by Proposition~\ref{proposition upper bound on score constant factor more slowly than pallete size}, similar to the first case, the final score in the auxiliary game is at most $2k$, so in particular there are at least two colors which are not dominating, one of which is not the color $1$. This gives us that in the main game there is a color which is not dominating, and so Bob wins the main game, and thus $\domg(G,A)\leq 2\cdot \domg(G,B)+1$.
	\end{proof}
	
	It is worth noting that the proof of Theorem~\ref{theorem gaps between Alice first and Bob first} can be improved if we have better bounds on the score game. In particular, all we needed in the proof is a value $\ell$ such that $\score(G,X,\ell)\leq \ell-2$. If it ends up that $\score(G,X,k)\leq \domg(G,X)$ (see Question~\ref{question score increase}), then the proof above could be strengthened to imply that $|\domg(G,A)-\domg(G,B)|\leq 2$. Even if we could prove this, it is unclear if this is tight; all known examples have the domatic game numbers differing by at most $1$ when we change who goes first. This leads us to the following conjecture.
	
	\begin{conjecture}\label{conjecture changing who goes first}
		For all graphs $G$,
		\[
		|\domg(G,A)-\domg(G,B)|\leq 1.
		\]
	\end{conjecture}
	
	It is worth noting that the difference between $\domg(G,A)$ and $\domg(G,B)$ can be $1$ in either direction. For example, it was shown in~\cite{HR2025} that $\domg(\ell K_2,A)=1$ and $\domg(\ell K_2,B)=2$. In the other direction, let $H$ denote the $5$ vertex graph formed by adding a pendant vertex to a vertex in $K_4$. Then it is straightforward to calculate that $\domg(H,A)=2$ while $\domg(H,B)=1$.
	
	\section{Subgraphs and Graph Operations}\label{section subgraphs}
	
	Hartnell and Rall asked about how the domatic game number is affected by vertex and edge deletion. We partially answer the edge deletion question below.
	
	\begin{observation}
		Let $G$ be a graph and let $e\in E(G)$. Then
		\[
		\domg(G,X)\geq \domg(G-e,X),
		\]
		and for any $\ell\in\mathbb{N}$,
		\[
		\score(G,X,\ell)\geq \score(G-e,X,\ell).
		\]
	\end{observation}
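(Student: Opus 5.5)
The approach is a strategy-stealing argument in which the game on $G$ and the game on $G-e$ are played on the same board. Both graphs have vertex set $V(G)$, and the rules of play do not depend on the edge set; the edges only matter when deciding the outcome at the end. So every sequence of moves that is legal in a game on $G-e$ is also legal, with the same order of play, in the corresponding game on $G$, and conversely. This means any strategy for Alice in the $(G-e,X,\ell)$-game (or score game) is also a strategy for her in the $(G,X,\ell)$-game: she responds to Bob's moves exactly as she would on $G-e$.

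The key fact is the following monotonicity: if $S\subseteq V(G)$ is a dominating set of $G-e$, then it is a dominating set of $G$. Indeed, every vertex is either in $S$ or adjacent in $G-e$ to some vertex of $S$. Since $E(G-e)\subseteq E(G)$, that adjacency persists in $G$. Hence for any final coloring of $V(G)$, each color class that dominates $G-e$ also dominates $G$. So the number of dominating color classes with respect to $G$ is at least the number with respect to $G-e$.

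For the first inequality, set $k:=\domg(G-e,X)$. Alice has a winning strategy in the $(G-e,X,k)$-game, and she uses it verbatim in the $(G,X,k)$-game. Every resulting final coloring has all $k$ colors dominating $G-e$, and hence dominating $G$. So Alice wins the $(G,X,k)$-game and $\domg(G,X)\ge k$. Because the domatic game number is defined as the largest winning palette size, this already suffices; Theorem~\ref{theorem game monotonicity in pallete size} is not needed.

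For the score inequality, Alice plays her optimal strategy from the $(G-e,X,\ell)$-score game on $G$. Against any Bob strategy, the final coloring has at least $\score(G-e,X,\ell)$ colors dominating $G-e$, hence at least that many dominating $G$. Therefore $\score(G,X,\ell)\ge\score(G-e,X,\ell)$.

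There is no real obstacle here. The only point needing care is to state clearly that the game trees coincide because the vertex sets are equal, so that the strategy transfer is legitimate.
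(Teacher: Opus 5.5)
Your proof is correct. The paper runs the same strategy transfer in the opposite direction. It takes Bob's winning strategy for the $(G,X,k+1)$-game with $k=\domg(G,X)$, and Bob's score-limiting strategy for the $(G,X,\ell)$-score game, and plays each verbatim on $G-e$, using that a color class that fails to dominate $G$ also fails to dominate $G-e$.

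Your Alice-side version is slightly tidier for the first inequality, because it only needs the single palette size $\domg(G-e,X)$. By contrast, knowing that Bob wins on $G-e$ at palette $k+1$ gives $\domg(G-e,X)\le k$ only after invoking Theorem~\ref{theorem game monotonicity in pallete size}, or after repeating the transfer for every palette size larger than $k$. The paper leaves that step implicit.
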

	
	\begin{proof}
		Let $k:=\domg(G,X)$. Bob has a winning strategy for the $(G,X,k+1)$-game on $G$, the same strategy will win the $(G-e,X,k+1)$-game. Similarly, Bob has a strategy to keep the score of the $(G,X,\ell)$-score game to at most $\score(G,X,\ell)$, the same strategy will bound the score of the $(G-e,X,\ell)$-game.
	\end{proof}
	
	A natural question to ask is how much can the domatic game number drop when deleting an edge. It seems feasible that the number to drop by at most $2$ due to the local nature of edge deletion, however we do not state this as a conjecture as it is unclear if there could be global strategy effects from the edge deletion. In the score version, we can make such a claim.
	
	\begin{proposition}
		Let  $G$ be a graph and let $e\in E(G)$. Then for any $\ell\in\mathbb{N}$, 
		\[
		\score(G-e,X,\ell)\geq \score(G,X,\ell)-2.
		\]
	\end{proposition}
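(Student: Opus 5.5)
Let $e=xy$. The plan is a strategy-stealing argument: Alice plays the $(G-e,X,\ell)$-score game by copying an optimal Alice strategy for the $(G,X,\ell)$-score game, and we compare the two final colorings. The two games are played move for move on the same vertex set, so Alice can play identically in both. We run an auxiliary $(G,X,\ell)$-score game between Alice2 (playing optimally) and Bob2, where Bob2 copies every move Bob makes in the main $(G-e,X,\ell)$ game, and Alice copies every move Alice2 makes. Since the legal moves in the two games are identical (same vertices, same palette), this is well defined, and the two final colorings $\varphi$ are literally the same.

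Next I compare which colors dominate. Alice2's optimality guarantees that at least $\score(G,X,\ell)$ colors dominate $G$ under $\varphi$. A color class $C$ that dominates $G$ fails to dominate $G-e$ only if some vertex loses its only neighbor in $C$ through the deletion of $e$. That vertex must be $x$, with $\varphi(y)\in C$ and $x\notin C$ and no other neighbor of $x$ in $C$, or symmetrically $y$, with $\varphi(x)\in C$. In the first case $C$ is the color $\varphi(y)$, and in the second it is $\varphi(x)$. Hence at most two colors, namely $\varphi(x)$ and $\varphi(y)$, can switch from dominating in $G$ to non-dominating in $G-e$. All other dominating colors of $G$ still dominate $G-e$, because closed neighborhoods in $G-e$ differ from those in $G$ only at $x$ and $y$.

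Therefore the final score in the main game is at least $\score(G,X,\ell)-2$ against any Bob strategy, which gives $\score(G-e,X,\ell)\ge\score(G,X,\ell)-2$. There is no real obstacle here. The only point needing care is the local analysis: confirming that only the colors of the endpoints can lose domination, and that the mirrored games are identical because no move in either game depends on the edge set. That step is also exactly why the argument fails for the win/lose version: a single lost color already makes Alice lose there.
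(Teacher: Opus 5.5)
Your proof is correct and matches the paper's argument: Alice copies an optimal Alice2 in an auxiliary $(G,X,\ell)$-score game while Bob2 mirrors Bob's moves, and only the colors on the two endpoints of $e$ can lose domination in $G-e$. Your local analysis is actually a bit more explicit than the paper's; there is one small notational slip, where you should write $y\in C$ rather than $\varphi(y)\in C$.
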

	
	\begin{proof}
		Let $e=v_1v_2$, and let us give an Alice strategy. We will play two games simultaneously, The main game will be Alice and Bob playing the $(G-e,X,\ell)$-score game and players Alice2 and Bob2 will play an auxiliary $(G,X,\ell)$-score game. Alice2 will play according to an optimal strategy. Bob2 will copy the moves Bob plays in the $(G-e,X,\ell)$-score game, and our Alice strategy will have copy the moves Alice2 plays in the $(G,X,\ell)$-score game. Since Alice2 used an optimal strategy, the $(G,X,\ell)$-score game resulted in a score of at least $\score(G,X,\ell)$. At the end of $(G-e,X,\ell)$-score game, all the colors that were dominating in the $(G,X,\ell)$ score game are still dominating, with the exception of possibly the colors on the endpoints of $e$, giving Alice total score at least $\score(G,X,\ell) -2$.
	\end{proof}

	We are able to say less about vertex deletion. It is straightforward to see that it is not monotone, for example if $G=K_{n-1}\sqcup K_1$ is the disjoint union of a large clique and an isolated vertex, the domatic game number can go up by close to $n/2$ by deleting the isolated vertex, and on the other hand, deleting vertices eventually drops the domatic game number to $1$. Similar to the edge deletion case, it seems reasonable to guess that the domatic game number will decrease by at most $1$ from deleting a vertex, due to the local effect, however it is unclear if this is the case.
	
	In the score version, we have the following.
	
	\begin{proposition}
		Let $G$ be a graph, $v\in V(G)$, and $\ell\in\mathbb{N}$. Then,
		\[
		\score(G-v,A,\ell)\geq \score(G,B,\ell) -1.
		\]
	\end{proposition}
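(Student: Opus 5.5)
We prove this with a strategy-stealing simulation, in the same style as the other proofs of this section. We run two games at once. The main game is the $(G-v,A,\ell)$-score game between Alice and Bob. The auxiliary game is the $(G,B,\ell)$-score game between Alice2 and Bob2, in which Alice2 plays optimally and so guarantees a final score of at least $\score(G,B,\ell)$.

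\textbf{Initialization.} The auxiliary game starts with a Bob2 move. We let Bob2 color $v$ itself, with an arbitrary color, say $1$. From then on the auxiliary game on $G$ is at Alice2's turn. The remaining uncolored vertices are exactly $V(G-v)$, and the main game on $G-v$ is at Alice's turn. So the two games are synchronized, with the same vertex set left to color and the same player to move.

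\textbf{Copying.} On each of Alice's turns, she copies Alice2's optimal move in the auxiliary game. On each of Bob's turns, Bob2 copies Bob's move in the auxiliary game. At the end, the colorings agree on $V(G-v)$, and the auxiliary game also has $v$ colored $1$. Alice2's optimality gives at least $\score(G,B,\ell)$ colors dominating $G$ in the auxiliary coloring. This holds whatever Bob2 does, so in particular it holds against this specific Bob2.

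\textbf{Transferring domination.} Let $c$ be a color dominating $G$ in the auxiliary game with $c\neq 1$. The color class of $c$ lies entirely in $V(G-v)$. Every vertex $w\neq v$ is either colored $c$ or has a neighbor colored $c$. That neighbor is not $v$, since $v$ has color $1\neq c$, so the edge survives in $G-v$. Hence $c$ dominates $G-v$ in the main game. Only color $1$ can be lost, which gives $\score(G-v,A,\ell)\geq \score(G,B,\ell)-1$.

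The main point to check is that forcing Bob2's first move onto $v$ is legitimate. It is, because Alice2's guarantee is against every Bob2 strategy, and the same fact justifies letting Bob2 copy Bob. With that in place, the transfer step is routine.
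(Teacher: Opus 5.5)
Your proof is correct and is essentially the paper's argument: both color $v$ with $1$ on Bob's first move, run a copy-simulation between the $(G,B,\ell)$-score game and the $(G-v,A,\ell)$-score game, and use the fact that only color $1$ can differ in whether it dominates $G$ versus $G-v$. The one difference is direction: you build an Alice strategy on $G-v$ from Alice2's optimal play on $G$, while the paper builds a Bob strategy on $G$ (after he colors $v$) from Bob2's optimal play on $G-v$. These two formulations are dual, and both yield the inequality.
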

	
	\begin{proof}
		We will give a strategy for Bob in the $(G,B,\ell)$-score game (the main game) played between players Alice and Bob that keeps the score to at most $\score(G-v,A,\ell)+1$, which will suffice to prove the result. We will also run an auxiliary $(G-v,A,\ell)$-score game with players Alice2 and Bob2. In the main game, Bob will start by coloring vertex $v$ color $1$. After this, Alice2 will copy all of Alice's moves, Bob2 will respond optimally in the auxiliary game, and Bob will copy Bob2's moves.
		
		Since Bob2 is playing optimally, the score of the auxiliary game is at most $\score(G-v,A,\ell)$. The only color that may be dominating in the main game that was not in the auxiliary game is the color $1$ (as $v$ is colored $1$), so at most the score in the main game is $\score(G-v,A,\ell)+1$.
	\end{proof}
	
	\subsection{Unions of Graphs}
	
	In this section we explore how the domatic game number may change under certain graph unions. In particular, we seek to understand when you can ``glue'' two graphs together, while still understanding the domatic game number. More precisely, given a graph $H$, a subset of vertices $L\subseteq V(H)$, and another graph $K$ such that $V(H\cap K)=L$, we'd like to understand the domatic game number of $H\cup K$ in terms of $H$, $K$ and $L$. Of particular interest is when we can bound the domatic game number based only on information about $H$ and $L$, and not $K$ (aside from the fact that $V(H\cap K)=L$).
	
	We start with some technical definitions, which will be useful for describing situations similar to the above, where we can upper bound the domatic game number of $H\cup K$ (i.e. provide a Bob strategy).
	
	\begin{definition}
		Let $H$ be a graph, $L\subseteq V(H)$, and $\ell\in\mathbb{N}$. We say the pair $(H,L)$ is \textbf{$(X,\ell)$-BobGood} if there exists a winning Bob strategy in the $(H,X,\ell)$-game such that at the end of any game played with this strategy, there exists a vertex $x\in V(H)\setminus L$ that Bob wins at (i.e. there exists a color $c\in [\ell]$ that does not appear anywhere on $N_H[x]$). Such a strategy will be called a \textbf{$(H,L,X,\ell)$-BobGood} strategy.
	\end{definition}
	
	Roughly, the pair $(H,L)$ is $(X,\ell)$-BobGood if Bob can win the $(H,X,\ell)$-game even if Alice is given the extra advantage of not having to make sure the vertices in $V(L)$ are actually dominated.
	
	\begin{lemma}\label{lemma Bob good}
		Let $H$ and $K$ be graphs with $L:=V(H\cap K)$, and let $\ell\in\mathbb{N}$. Assume $(H,L)$ is $(X,\ell)$-BobGood. Then the following hold.
		\begin{enumerate}[label=(B\arabic*)]
			\item If $|V(K)|-|L|$ is even, then $\domg(H\cup K,X)\leq \ell-1$.\label{statement bob good even}
			\item If $|V(K)|-|L|$ is odd and $X=A$, then $\domg(H\cup K,B)\leq \ell-1$.\label{statement bob good odd}
		\end{enumerate}
	\end{lemma}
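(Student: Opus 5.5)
We will show that Bob wins the $(H\cup K,\cdot,\ell)$-game by running the BobGood strategy on $H$ and pairing off the moves in $K$. Set $W:=V(K)\setminus L$. Then $V(H\cup K)$ is the disjoint union of $V(H)$ and $W$. The key structural fact is that a vertex $x\in V(H)\setminus L$ has no neighbors in $W$, since every edge of $K$ has both ends in $V(K)$ and $x\notin V(K)$. Hence $N_{H\cup K}[x]=N_H[x]$. So if at the end of play the coloring restricted to $V(H)$ is the final position of an $(H,X',\ell)$-game played with an $(H,L,X',\ell)$-BobGood strategy, then some color is missing from $N_{H\cup K}[x]$ for some such $x$, and Bob wins in $H\cup K$. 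Everything therefore reduces to making the play on $V(H)$ look like a legitimate $H$-game with the right first player, in which Bob follows the BobGood strategy.

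For \ref{statement bob good even}, $|W|$ is even. Bob runs the $(H,L,X,\ell)$-BobGood strategy on $V(H)$ and answers moves in $W$ inside $W$. Whenever Alice colors a vertex of $W$, Bob colors another uncolored vertex of $W$ arbitrarily. This is always possible: Bob's replies keep the number of uncolored vertices of $W$ even after each of his turns, so after Alice plays there, an odd number, hence at least one, remain. Whenever Alice plays in $V(H)$, Bob responds in $V(H)$ according to the BobGood strategy. If $X=B$, Bob opens in $V(H)$ with the strategy's first move. The subsequence of moves on $V(H)$ then alternates correctly with the same first player $X$. Bob never needs to move in $W$ unprompted. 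One check is needed: when $V(H)$ is full and it is Bob's turn, the only uncolored vertices lie in $W$. This cannot happen, because Bob only moves after Alice and in the same region as Alice's last move, except for the opening move when $X=B$, which is in $V(H)$. The projected game on $H$ ends in a BobGood final position, so $\domg(H\cup K,X)\le \ell-1$ by Theorem~\ref{theorem game monotonicity in pallete size}.

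For \ref{statement bob good odd}, we are in the game on $H\cup K$ where Bob moves first, and $|W|$ is odd. Bob spends his first move on a vertex of $W$, with any color. Now $|W|-1$ is even and Alice moves next, so we are exactly in the situation of the previous paragraph with Alice to move: Bob pairs within $W$ and runs the $(H,L,A,\ell)$-BobGood strategy on $V(H)$. The first move on $V(H)$ is then Alice's, matching $X=A$. The same argument gives Bob a win with palette $\ell$, hence $\domg(H\cup K,B)\le \ell-1$, again using monotonicity.

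The main obstacle is the turn-parity bookkeeping. We must verify that Bob is never forced to move in the wrong region, for example because the $H$-strategy's required reply is unavailable or $W$ is already exhausted. We must also verify that the ended $H$-subgame is a complete game consistent with the strategy. Both follow from the invariant that $W$ has an even number of uncolored vertices after each of Bob's turns, together with the fact that Bob only answers in the region where Alice just played. The domination fact $N_{H\cup K}[x]=N_H[x]$ for $x\notin L$ is immediate and is the reason the BobGood hypothesis is needed.
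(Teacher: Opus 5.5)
Your proof is correct and follows essentially the same route as the paper: you answer Alice's moves in $V(K)\setminus L$ inside $V(K)\setminus L$, run the BobGood strategy on $V(H)$, and in \ref{statement bob good odd} spend Bob's opening move in $V(K)\setminus L$; you also spell out two steps the paper leaves implicit, namely $N_{H\cup K}[x]=N_H[x]$ for $x\in V(H)\setminus L$ and the appeal to Theorem~\ref{theorem game monotonicity in pallete size}. One small slip is your claim that $V(H)$ cannot be full on Bob's turn, which is false because Alice may color the last vertex of $V(H)$; this is harmless, since the $H$-subgame is then complete and Bob may play arbitrarily in $V(K)\setminus L$, which is exactly the fallback the paper builds into its proof of \ref{statement bob good odd}.
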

	
	\begin{proof}
		Let us start with~\ref{statement bob good even}. We will give a Bob strategy in the $(H\cup K,X,\ell)$-game. If on the previous turn, Alice played on $V(K)\setminus L$, then Bob will arbitrarily choose a vertex in $V(K)\setminus L$ and play the color $1$. Otherwise, Bob plays on $H$ with an $(H,L,X,\ell)$-BobGood strategy. This $|V(K)|-|L|$ is even, this is a valid strategy (i.e. Bob is always able to play in $V(K)\setminus L$ after Alice does). Since Bob played according to a $(H,L,X,\ell)$-BobGood strategy on $H$ a vertex in $V(H)\setminus L$ ends up not being dominated by one of the $\ell$ colors, so Bob wins. Thus, $\domg(H\cup K,X)\leq \ell-1$.
		
		Now let us prove~\ref{statement bob good odd}. We will give a Bob strategy in the $(H\cup K,B,\ell)$-game. Bob will play his first move in $V(K)\setminus L$. If on the previous turn Alice played in $V(K)\setminus L$, Bob will play in $V(K)\setminus L$, and otherwise Bob will play in $V(H)$, unless every vertex in $V(H)$ already has a color, in which case Bob will play in $V(K)\setminus L$. Whenever Bob plays in $V(K)\setminus L$, he arbitrarily chooses a vertex and colors it $1$, whereas when he plays in $V(H)$, he plays according to a $(H,L,X,\ell)$-BobGood strategy. Since Bob plays a BobGood strategy on $H$, this results in a vertex in $V(H)\setminus L$ not being dominated by some color, so Bob wins. This gives us that $\domg(H\cup K,B)\leq \ell-1$.
	\end{proof}
	
	One example of using BobGood sets to bound the game domatic number is in graphs with adjacent vertices of degree $2$ - If $x$ and $y$ are the degree one vertices in $P_4$, it is is straightforward to show that $(P_4,\{x,y\})$ is $(X,2)$-BobGood, and similarly if $x$ and $y$ are adjacent vertices in $C_4$, then $(C_4,\{x,y\})$ is also $(X,2)$-BobGood. Using this, along with Lemma~\ref{lemma Bob good} we immediately get the following.
	
	\begin{proposition}
		Let $G$ be a graph with an even number of vertices which contains a pair of adjacent degree $2$ vertices. Then $\domg(G,X)=1$.
	\end{proposition}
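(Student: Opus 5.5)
The plan is to reduce the statement to Lemma~\ref{lemma Bob good}\ref{statement bob good even} with $\ell=2$. That lemma gives $\domg(G,X)\leq 1$. Since $\domg(G,X)\geq 1$ trivially, this yields $\domg(G,X)=1$.

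\textbf{Setting up the decomposition.} Let $x,y$ be adjacent vertices of degree $2$. Let $a$ be the other neighbour of $x$ and $b$ the other neighbour of $y$. I would first treat the triangle case $a=b$ separately. Here I expect the statement needs an extra hypothesis, since $2K_3$ has even order and appears to satisfy $\domg(2K_3,X)\geq 2$: Alice can answer inside each triangle so that each triangle receives both colours. So I would assume $a\neq b$, as the intended use of $P_4$ and $C_4$ suggests. Then put
\[
H:=G[\{a,x,y,b\}],\qquad K:=G-\{x,y\},\qquad L:=\{a,b\}.
\]
So $H$ is the path $a x y b$ when $ab\notin E(G)$, and the $4$-cycle with $x,y$ adjacent when $ab\in E(G)$. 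We have $H\cup K=G$ and $V(H\cap K)=L$, because $x$ and $y$ have no neighbours outside $H$. The parity condition holds:
\[
|V(K)|-|L|=(n-2)-2=n-4,
\]
which is even. So once $(H,L)$ is shown to be $(X,2)$-BobGood, statement~\ref{statement bob good even} finishes the proof.

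\textbf{The BobGood property.} This is the main step, and I expect it to be the main obstacle. Bob must force $N_H[x]=\{a,x,y\}$ or $N_H[y]=\{x,y,b\}$ to miss a colour. In the strategy of Lemma~\ref{lemma Bob good}, Bob enters $H$ first when $X=B$ and only answers Alice's moves in $H$ when $X=A$. So there are only two small finite games to check, with $4$ vertices and $2$ colours, and I would analyse them by hand.

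My first attempt is a copying strategy. Whenever a vertex of $\{x,y\}$ receives colour $c$, Bob colours the partner vertex in $\{x,y\}$ with $c$. He then tries to place $c$ on the $L$-vertex whose neighbourhood is otherwise all $c$. When Alice opens on $x$ or $y$, this works: after Bob copies, both of $x,y$ carry $c$, and Bob colours the remaining $L$-vertex $c$ once Alice plays the other one.

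The delicate case is Alice opening on $a$ (or $b$) when $X=A$. A quick check suggests Alice may then be able to save both $x$ and $y$ in $P_4$. So I would carefully enumerate Bob's three possible replies there, and use the extra edge $ab$ in the $C_4$ case. If the Alice-first case genuinely fails, I would fall back on a different decomposition: for instance, take $H$ larger so that Bob can pair moves, or use statement~\ref{statement bob good odd} with a modified $L$.
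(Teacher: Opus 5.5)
Your decomposition ($H=G[\{a,x,y,b\}]$, $L=\{a,b\}$, $K=G-\{x,y\}$, Lemma~\ref{lemma Bob good}(B1) with $\ell=2$) is the paper's route. Your triangle objection is correct: the paper silently assumes $a\neq b$, and $2K_3$ refutes the statement as written.

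\textbf{The gap is the case $X=B$, which you never check.} When $X=B$, Bob must open in $H$, and Alice answers with a pairing. She pairs $a$ with $x$ and $y$ with $b$, and whenever Bob colors a vertex she colors its partner with the other color. Every closed neighborhood in $H$ (for $H=P_4$ or $C_4$) contains one of these pairs. So Alice wins the $(H,B,2)$-game outright, and $(H,L)$ is not $(B,2)$-BobGood.

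No other decomposition can repair this, because the proposition is false for $X=B$. Any graph with a perfect matching contains a spanning $\frac{n}{2}K_2$. By the paper's edge-deletion observation and $\domg(\ell K_2,B)=2$, such a graph has $\domg(G,B)\geq 2$. Examples are $P_4$, $C_4$, and every even cycle. The paper's one-line proof asserts $(X,2)$-BobGoodness of $P_4$ and $C_4$ for both $X$, so it contains the same error.

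\textbf{The case you flagged as delicate is fine.} Suppose Alice opens by coloring $a$ with $c$. Bob colors $b$ with the other color $c'$. Whatever color $d$ Alice then puts on one of $x,y$, Bob puts $d$ on the other. This makes $N_H[x]=\{a,x,y\}$ monochromatic if $d=c$, and $N_H[y]=\{x,y,b\}$ monochromatic if $d=c'$. The edge $ab$ is irrelevant, since only these two neighborhoods matter.

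Together with your copying reply to an opening on $x$ or $y$, this shows $(H,L)$ is $(A,2)$-BobGood. So your argument correctly proves the following. If $x,y$ are adjacent degree-$2$ vertices whose other neighbors satisfy $a\neq b$, then $\domg(G,A)=1$ when $n$ is even, by (B1). By (B2), also $\domg(G,B)=1$ when $n$ is odd. That is the corrected statement you should aim for.
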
 
	
	BobGood sets in general may not be easy to identify, however dominating vertices are natural candidates for BobGood sets.
	
	\begin{lemma}\label{lemma dominating bobgood}
		Let $H$ be a graph with $dom(H,X)=\ell$, and let $L\subset V(G)$ be a collection of dominating vertices of $G$. Then, $(H,L)$ is $(X,\ell+1)$-BobGood.
	\end{lemma}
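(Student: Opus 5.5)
Since $\domg(H,X)=\ell$, the $(H,X,\ell+1)$-game is Bob-win, so Bob has some winning strategy $\sigma$ in it. The plan is to show that $\sigma$ itself is already an $(H,L,X,\ell+1)$-BobGood strategy, so nothing needs to be constructed. (We read the statement with $G=H$ and $\domg$ in place of $dom$.) What must be shown is this: at the end of any game played according to $\sigma$, some vertex $x\in V(H)\setminus L$ fails to see some color on $N_H[x]$.

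The key observation concerns a vertex $y\in L$. Because $y$ dominates $H$, we have $N_H[y]=V(H)$. So $y$ fails to see color $c$ exactly when $c$ appears on no vertex of $H$ at all. In that case $c$ is also absent from $N_H[x]$ for every vertex $x$. Now fix an end position of a game played with $\sigma$. Since $\sigma$ is winning, some color $c\in[\ell+1]$ is not dominating, so some vertex $z$ has $c\notin N_H[z]$. There are two cases.
\begin{itemize}
\item If $z\notin L$, then $x=z$ works directly.
\item If $z\in L$, then $c$ is unused on all of $V(H)$. If $V(H)\setminus L\neq\emptyset$, any $x\in V(H)\setminus L$ works.
\end{itemize}
Either way $\sigma$ meets the BobGood requirement.

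The only obstacle is the degenerate case $L=V(H)$, meaning $H$ is complete and every vertex is in $L$. Then $V(H)\setminus L$ is empty, so no strategy can be BobGood, and the lemma fails as literally stated (for example, take $H=K_1$). I would handle this by reading the hypothesis $L\subset V(G)$ as a proper subset, which is presumably the intended meaning, and noting this explicitly. With that reading, the argument above is complete; it needs no strategy-stealing or auxiliary game beyond the existence of $\sigma$, which follows from the definition of $\domg(H,X)=\ell$.
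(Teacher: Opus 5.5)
Your proof is correct and is essentially the paper's argument: fix any winning Bob strategy for the $(H,X,\ell+1)$-game, and if the vertex where Bob wins lies in $L$, then, being a dominating vertex, the missing color is absent from all of $H$, so any vertex of $V(H)\setminus L$ is also not dominated by that color. Your remark that the lemma needs $V(H)\setminus L\neq\emptyset$ (it fails for $H=K_1$, $L=V(H)$) is a valid caveat that the paper's proof silently assumes, and your reading of $L\subset V(G)$ as a proper subset of $V(H)$ is the right repair.
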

	
	\begin{proof}
		Consider an optimal Bob strategy in the $(H,X,\ell+1)$-game, call it STRAT. We claim that at the end of any $(H,X,\ell+1)$-game where Bob plays according to STRAT, there is a vertex $v\in V(H)\setminus L$ and a color $c\in [\ell+1]$ such that $v$ is not dominated by the color $c$. Indeed, since Bob wins the $(H,X,\ell+1)$-game, Bob wins at some vertex $x\in V(H)$. If $x\in V(H)\setminus L$, we have proven our claim. Otherwise, $x\in L$, and since $x$ is a dominating vertex, there must be some color $c\in [\ell+1]$ that does not appear at all on $H$. so for any $v\in V(H)\setminus L$, $v$ is not dominated by $c$. Thus, in any case, we find that Bob wins at a vertex in $V(H)\setminus L$ when playing according to STRAT. Thus, $(H,L)$ is $(X,\ell+1)$-BobGood.
	\end{proof}
	
	We now focus on lower bounds for graph unions. Again, we need a few technical definitions.
	
	\begin{definition}
		Let $H$ be a graph, $L\subseteq V(H)$, and $\ell\in\mathbb{N}$. We say the pair $(H,L)$ is \textbf{$(X,\ell)$-AliceGood} if there exists a winning Alice strategy in the $(H-L,X,\ell)$-game. Such a strategy will be called a \textbf{$(H,L,X,\ell)$-AliceGood} strategy.
	\end{definition}
	
	In essence, $(H,L)$ is $(X,\ell)$-AliceGood if Alice can win on $V(H)\setminus L$.

	\begin{lemma}\label{Lemma alicegood}
		Let $H$ and $K$ be graphs with $L:=V(H\cap K)$, and let $\ell\in\mathbb{N}$. Assume $(H,L)$ is $(X,\ell)$-AliceGood and that $dom(K,Y)\geq \ell$. Then the following hold.
		\begin{enumerate}[label=(A\arabic*)]
			\item If $X=Y=B$ and $|V(K)|$ and $|V(H)\setminus L|$ are both even, then $\domg(H\cup K,B)\geq \ell$.\label{statement alicegood x=y=b}
			\item If $X=A$, $Y=B$, $|V(K)|$ is even and $|V(H)\setminus L|$ is odd, then $\domg(H\cup K,A)\geq \ell$.\label{statement alicegood x=a y=b}
			\item If $X=B$, $Y=A$, $|V(K)|$ is odd and $|V(H)\setminus L|$ is even, then $\domg(H\cup K,A)\geq \ell$.\label{statement alicegood x=b y=a}
		\end{enumerate}
	\end{lemma}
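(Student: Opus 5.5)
Throughout, $H\cup K$ has vertex set $V(H\cup K)=(V(H)\setminus L)\sqcup V(K)$, so the board splits into two disjoint regions. Region $\mathcal{H}:=V(H)\setminus L$ carries the game $H-L$; region $\mathcal{K}:=V(K)$ contains $L$ and carries the game on $K$. The plan is a standard pairing/strategy-stealing argument. Alice runs her $(H,L,X,\ell)$-AliceGood strategy on $\mathcal{H}$ and a winning $(K,Y,\ell)$-strategy on $\mathcal{K}$ (which exists since $\domg(K,Y)\geq\ell$, and by Theorem~\ref{theorem game monotonicity in pallete size} Alice wins with palette exactly $\ell$). Her rule is \emph{follow Bob}: whenever Bob moves in a region, Alice answers in the same region with that region's strategy. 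The parity hypotheses make this well defined. In each region the local order of play then matches the prescribed first player $X$ on $\mathcal{H}$ and $Y$ on $\mathcal{K}$.

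First I check domination is inherited. A vertex $x\in\mathcal{H}$ has $N_{H\cup K}[x]\supseteq N_{H-L}[x]$, so winning $H-L$ dominates $\mathcal{H}$. A vertex $x\in V(K)$ has $N_{H\cup K}[x]\supseteq N_K[x]$, so winning on $K$ dominates $V(K)$, including $L$. Hence if Alice wins both local games, every color dominates $H\cup K$.

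Next the case analysis on who starts each local game. In (A1) Bob moves first globally, and both regions have even size. Bob opens in some region, Alice replies there, and so on. Every region is entered first by Bob, which matches $X=Y=B$. Since each region has even size, Alice can always reply in the region Bob just played, so she never has to enter a region uninvited.

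In (A2) Alice moves first. She plays her first move in $\mathcal{H}$ (which has odd size), consistent with $X=A$ there. After that, $\mathcal{H}$ has an even number of remaining vertices and $\mathcal{K}$ is even. She then follows Bob, so Bob makes the first move in $\mathcal{K}$, matching $Y=B$.

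(A3) is symmetric. Alice opens in $\mathcal{K}$ (odd size, $Y=A$), leaving both remaining regions even, and then follows Bob, so Bob starts $\mathcal{H}$, matching $X=B$.

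The main obstacle is making the parity bookkeeping rigorous. After Alice's forced first move, all unfilled region sizes are even. Each Bob move makes one region odd, and Alice's reply restores evenness, so a reply in the same region is always available. In particular Alice never has to skip or move in a local game out of turn, and each local game is played with strict alternation in the correct order. Once this invariant is stated precisely, the two local winning strategies combine to show Alice wins the $(H\cup K,\cdot,\ell)$-game, giving $\domg(H\cup K,\cdot)\geq\ell$ in each case.
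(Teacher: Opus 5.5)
Your proposal is correct and follows the paper's proof: Alice answers in whichever region ($V(K)$ or $V(H)\setminus L$) Bob just played, opens in the odd-sized region in (A2) and (A3), and relies on the parity hypotheses so that a reply is always available and each local game runs in the prescribed order. Your explicit even-remainder invariant, and your appeal to Theorem~\ref{theorem game monotonicity in pallete size} to get a winning $K$-strategy with palette exactly $\ell$, spell out two points the paper leaves implicit.
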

	
	\begin{proof}
		Consider the following Alice Strategy for the $(H\cup K,Z,\ell)$ game (where $Z=B$ if we are in situation~\ref{statement alicegood x=y=b}, and $Z=A$ in situations~\ref{statement alicegood x=a y=b} and~\ref{statement alicegood x=b y=a}). If Bob played on $V(K)$ in the previous round, then Alice will also play on $V(K)$ according to a winning $(K,Y,\ell)$-game strategy. If Bob played on $V(H)\setminus L$ in the previous round, then Alice will also play on $V(H)\setminus L$ according to a $(H,L,X,\ell)$-AliceGood strategy. If we are in situation~\ref{statement alicegood x=a y=b}, on the first move Alice plays in $V(H)\setminus L$ (following the $(H,L,X,\ell)$-AliceGood strategy), where if we are in situation~\ref{statement alicegood x=b y=a}, on the first move Alice plays in $V(K)$ (with a winning $(K,Y,\ell)$-game strategy).
		
		By the conditions on the parity of $|V(K)|$ and $|V(H)\setminus L|$, the above is a valid strategy in any of the three situations (i.e. there is never a time before the game ends when Alice wants to make a move in a set, but there are no uncolored vertices in that set), and in all three cases the game ends with all vertices in $V(K)$ and in $V(H)\setminus L$ being dominated by all colors, so Alice wins in all cases.
	\end{proof}
	
	As an example of how to apply AliceGood subgraphs, we present the following proposition, showing that certain unions of cliques have bounded domatic game number. Given an integer $t\geq 1$ will say a graph $G$ is a \textbf{$K_{2t+1}$-tree} if there exists a collection $K^{(1)}$, $K^{(2)}$,\dots $K^{(s)}$ of copies of $K_{2t+1}$ such that $G=\bigcup_{i=1}^s K^{(i)}$ and for any $i\in [s]\setminus\{1\}$, $|V(\bigcup_{j=1}^{i-1}K^{(j)})\cap V(K^{(i)})|=1$.
	
	\begin{proposition}
		Let $G$ be a $K_{2t+1}$-tree. Then
		\[
		\domg(G,A)=t+1.
		\]
	\end{proposition}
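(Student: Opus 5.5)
The plan is to prove the two inequalities separately: the upper bound from Proposition~\ref{proposition upper bound in terms of min degree}, and the lower bound by induction on the number $s$ of cliques, using Lemma~\ref{Lemma alicegood} for the gluing step.

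\textbf{Upper bound.} Every vertex of $G$ lies in some $K^{(i)}$, so $\delta(G)\geq 2t$. The last clique $K^{(s)}$ meets $\bigcup_{j<s}K^{(j)}$ in a single vertex, so any other vertex of $K^{(s)}$ has degree exactly $2t$. (For $s=1$, every vertex has degree $2t$.) Hence $\delta(G)=2t$. Proposition~\ref{proposition upper bound in terms of min degree} then gives $\domg(G,A)\leq \frac{2t+3}{2}$. Since $\domg(G,A)$ is an integer, $\domg(G,A)\leq t+1$.

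\textbf{Lower bound, base case $s=1$.} Here $G=K_{2t+1}$ and Alice moves first, so she makes $t+1$ of the $2t+1$ moves. On each move she colors a vertex with a color from $[t+1]$ not yet present, and she plays arbitrarily once all colors appear. Each of her moves adds a new color until all $t+1$ colors are present. In a clique, a color is dominating as soon as it appears once, so Alice wins the $(K_{2t+1},A,t+1)$-game. By Theorem~\ref{theorem game monotonicity in pallete size}, this gives $\domg(K_{2t+1},A)\geq t+1$.

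\textbf{Inductive step.} Let $G'=\bigcup_{j<s}K^{(j)}$, which is a $K_{2t+1}$-tree with $s-1$ cliques. By induction, $\domg(G',A)\geq t+1$. Apply Lemma~\ref{Lemma alicegood} with $K:=G'$, $H:=K^{(s)}$, $L:=\{v\}$ where $v$ is the shared vertex, $\ell:=t+1$, $Y=A$ and $X=B$; this is situation~\ref{statement alicegood x=b y=a}. The parity hypotheses hold:
\begin{itemize}
\item $|V(G')|=1+2t(s-1)$ is odd;
\item $|V(H)\setminus L|=2t$ is even.
\end{itemize}
It remains to check that $(K^{(s)},\{v\})$ is $(B,t+1)$-AliceGood, i.e.\ that Alice wins the $(K_{2t},B,t+1)$-game. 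Bob's first move introduces one color. Alice then makes $t$ moves, and each time she plays a color not yet present. Until all $t+1$ colors appear, some color is always missing, so every Alice move adds a new one. Bob's first color plus Alice's $t$ new colors make all $t+1$ colors appear in the clique, hence each is dominating. Lemma~\ref{Lemma alicegood} then gives $\domg(G,A)\geq t+1$.

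The main point needing care is that Lemma~\ref{Lemma alicegood} requires $|V(K)|$ odd and $|V(H)\setminus L|$ even. These hold because every $K_{2t+1}$-tree has $1+2ts$ vertices, and that count is what makes the induction close.
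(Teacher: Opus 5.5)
Your proof is correct. Your lower bound is the paper's argument: induction on $s$, gluing the last clique on with Lemma~\ref{Lemma alicegood} in situation~\ref{statement alicegood x=b y=a}. You correctly record that the property needed is $(B,t+1)$-AliceGood, i.e.\ Alice wins the $(K_{2t},B,t+1)$-game; the paper writes ``$(A,t+1)$'' here, which is a slip. You verify the base case and the $K_{2t}$ game directly, where the paper cites Hartnell and Rall.

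Your upper bound takes a genuinely different route. The paper stays inside its gluing framework. Since $v$ is a dominating vertex of $K^{(s)}$ and $\domg(K_{2t+1},A)=t+1$, Lemma~\ref{lemma dominating bobgood} makes $(K^{(s)},\{v\})$ $(A,t+2)$-BobGood. Since $|V(G')|-1$ is even, \ref{statement bob good even} of Lemma~\ref{lemma Bob good} then gives $\domg(G,A)\le t+1$. You instead note that the non-shared vertices of $K^{(s)}$ have degree exactly $2t$, and apply Proposition~\ref{proposition upper bound in terms of min degree} with integrality. Your route is shorter and needs neither the game on $K_{2t+1}$ nor any parity condition. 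It also gives $\domg(G,B)\le t+1$ for free. The paper's route is chosen to illustrate the BobGood lemma, which still gives information when the minimum-degree bound is not sharp.

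One small remark: in the base case you do not need Theorem~\ref{theorem game monotonicity in pallete size}, since Alice winning with palette $t+1$ gives $\domg(K_{2t+1},A)\ge t+1$ by definition. Monotonicity is what lets you turn the inductive hypothesis $\domg(G',A)\ge t+1$ into a winning strategy at palette exactly $t+1$, which Lemma~\ref{Lemma alicegood} requires. Alternatively, simply carry ``Alice wins the $(G',A,t+1)$-game'' as the inductive hypothesis.
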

	
	\begin{proof}
		To prove that $\domg(G,A)\geq t+1$, we will use induction on the number of copies $s$ of $K_{2t+1}$ there are in $G$. If $s=1$, then $G=K_{2t+1}$, and in~\cite{HR2025}, it was shown that $\mathrm{dom}(K_{2t+1},A)=t+1$. Now assume $s\geq 2$. Let $G$ be a $K_{2t+1}$-tree with cliques $K^{(1)}$, $K^{(2)}$, \dots $K^{(s)}$ and let $G':=\bigcup_{i=1}^{s-1}K^{(i)}$, noting that $G'$ is also a $K_{2t+1}$-tree, and by induction we have that $\domg(G',A)=t+1$. For any vertex $v\in V(K_{2t+1})$, we have that $\domg(K_{2t+1}-v,B)=\domg(K_{2t},B)=t+1$, where the second equality was shown in~\cite{HR2025}, so $(K_{2t+1},\{v\})$ is $(A,t+1)$-AliceGood. Since $|V(G')|$ is odd and $|V(K_{2t+1})\setminus \{v\}|$ is even, by Lemma~\ref{Lemma alicegood}, $\domg(G_{k+1},A)\geq n+1$. 
		
		To see that $\domg(G,A)\leq t+1$, notice that taking $v\in V(K^{(s)})\cap V(G')$, we have that $(K^{(s)},v)$ is $(A,t+2)$-BobGood by Lemma~\ref{lemma dominating bobgood}. Since $|V(G')\setminus \{v\}|$ is even, $\domg(G,A)\leq t+2-1$ by Lemma~\ref{lemma Bob good}. Therefore, $\domg(G,A)=t+1$.
	\end{proof}
	
	\section{Concluding Remarks}\label{section concluding remarks}
	
	Theorem~\ref{theorem main theorem on degree} is perhaps our most significant theorem, giving a lower bound for $\domg(G,X)$ on the order of $\frac{\delta(G)}{\log n}$ for a graph $G$ of order $n$. Comparing this to Theorem~\ref{theorem domatic number minimum degree maximum degree lower bound}, it is natural to wonder if this can be improved to $\frac{\delta(G)}{\log \Delta(G)}$. The $\log n$ in our bound stems directly from the use of Theorem~\ref{theorem Erdos selfridge}, so any improvement over Theorem~\ref{theorem main theorem on degree} would need to avoid this somehow. We do not know of examples with domatic game number smaller (in order of magnitude) than $\frac{\delta(G)}{\log \Delta(G)}$, so it is unclear if Theorem~\ref{theorem main theorem on degree} is best possible. We leave this as an open question to the reader.
	
	\begin{question}
		Does there exist a sequence of graphs $G_n$ such that
		\[
		\domg(G_n,X)=o\left(\frac{\delta(G_n)}{\log \Delta(G_n)}\right)?
		\]
		If so, is Theorem~\ref{theorem main theorem on degree} tight?
	\end{question}
	
	Another natural avenue to explore is the behavior of the domatic score game on $G$ when the palette size is larger than $\domg(G,X)$. Proposition~\ref{proposition score does not decrease much} shows us that the score cannot be much smaller than the domatic game number, while Proposition~\ref{proposition upper bound on score constant factor more slowly than pallete size} shows that if the score increases as the palette size increases, it does so a constant factor slower. However, we do not know if the score can even exceed the domatic game number.
	
	\begin{question}\label{question score increase}
		For any graph $G$ and $k\geq \domg(G,X)$, is it the case that is it the case that
		\[
		\score(G,X,k)\leq \domg(G,X)?
		\]
		Is there a monotonic relationship between $\score(G,X,k)$ and $\score(G,X,k+1)$ when $k>\domg(G,X)$?
	\end{question}
	
	Part of the motivation for answering Question~\ref{question score increase} is that improvements on Proposition~\ref{proposition upper bound on score constant factor more slowly than pallete size} can further imply better bounds than those given in Theorem~\ref{theorem gaps between Alice first and Bob first} on the effect of changing who goes first. In particular, we would be very interested in a proof (or disproof) of Conjecture~\ref{conjecture changing who goes first}.
	
	The final direction for further exploration we will mention here involves vertex and edge deletion. Originally asked by Hartnell and Rall~\cite{HR2025}, we provide a partial answer in some of our results in Section~\ref{section subgraphs}, however there is still a lot left unknown. In particular this question seems difficult due to the effect of changing the palette size - making common ``strategy borrowing'' arguments not work. A few specific things of interest are summarized in the following question.
	
	\begin{question}
		How much can the domatic game number drop from deleting a single edge? A single vertex? Is there a reasonable characterization of the situations where the domatic game number can increase when deleting a vertex?
	\end{question}
	
	\bibliographystyle{amsplain}
	\bibliography{bib}{}

\end{document}